\documentclass{louloupart1}
\usepackage{newtxtext,newtxmath}
\usepackage{pinlabel}
\usepackage{graphicx}
\usepackage[all]{xy}
\usepackage{xcolor}

\title[The braid group $\BB_n$ is not a quotient of a quasi-Coxeter interval group of type $D_n$]{The braid group $\BB_n$ is not a quotient of a quasi-Coxeter interval group of type $D_n$}
\author[B. Baumeister]{Barbara Baumeister}
\givenname{Barbara}
\surname{Baumeister}
\address{Barbara Baumeister, Fakultät für Mathematik Universität Bielefeld, Postfach 10 01 31, 33501 Bielefeld, Germany.}
\email{b.baumeister@math.uni-bielefeld.de}
\author[L. Paris]{Luis Paris}
\givenname{Luis}
\surname{Paris}
\address{Luis Paris, Universit\'e Bourgogne Europe, CNRS, IMB, UMR 5584, 21000 Dijon, France.}
\email{lparis@u-bourgogne.fr}
\author[S. Rees]{Sarah Rees}
\givenname{Sarah}
\surname{Rees}
\address{Sarah Rees, School of Mathematics, Statistics and Physics, University of Newcastle, Newcastle NE1 7RU, UK.}
\email{sarah.rees@newcastle.ac.uk}

\newtheorem{thm}{Theorem}[section]
\newtheorem{lem}[thm]{Lemma}
\newtheorem{prop}[thm]{Proposition}
\newtheorem{corl}[thm]{Corollary}
\newtheorem{conj}[thm]{Conjecture}

\newtheorem{claim}{Claim}

\theoremstyle{definition}
\newtheorem{expl}[thm]{Example}

\theoremstyle{definition2}
\newtheorem*{acknow}{Acknowledgments}

\newtheorem*{rem}{Remark}

\numberwithin{equation}{section}

\makeatletter
\renewcommand{\thefigure}{\ifnum \c@section>\z@ \thesection.\fi
 \@arabic\c@figure}
\@addtoreset{figure}{section}
\makeatother

\begin{document}

\def\BB{\mathcal B} \def\N{\mathbb N} \def\PP{\mathcal P}
\def\Homeo{{\rm Homeo}} \def\MM{\mathcal M} \def\A{\mathbb A}
\def\S{\mathbb S} \def\D{\mathbb D} \def\C{\mathbb C}
\def\Z{\mathbb Z} \def\P{\mathbb P} \def\CC{\mathcal C}
\def\AA{\mathcal A} \def\QQ{\mathcal Q} \def\conj{{\rm conj}}
\def\Im{{\rm Im}} 


\begin{abstract}
We prove that, for $n \ge 5$, an interval group associated with a proper quasi-Coxeter element of the Coxeter group of type $D_n$ admits no surjective homomorphism onto the braid group on $n$ strands.
In particular, this provides an alternative proof that such a group is not isomorphic to the Artin group of type $D_n$.
The proof relies on techniques from the theory of mapping class groups, and a large part of the paper provides an exposition of this theory for non-specialists.

\smallskip\noindent
{\bf AMS Subject Classification.\ \ } 
Primary: 20F36.

\smallskip\noindent
{\bf Keywords.\ \ } 
Artin groups of type $D_n$, interval groups, quasi-Coxeter elements, braid groups.

\end{abstract}

\maketitle


\section{Introduction}\label{sec1}

Let $S$ be a finite set.
A \emph{Coxeter matrix} over $S$ is a square matrix $M = (m_{s,t})_{s,t \in S}$ indexed by the elements of $S$, with entries in $\N_{\ge 1} \cup \{ \infty\}$, such that $m_{s,s} = 1$ for all $s \in S$ and $m_{s,t} = m_{t,s} \ge 2$ for all distinct $s,t \in S$.
Such a matrix is usually represented by a labeled graph, $\Gamma$, called the \emph{Coxeter graph} of $M$.
The vertex set of $\Gamma$ is $S$, two vertices $s$ and $t$ are joined by an edge whenever $m_{s,t} \ge 3$, and this edge is labeled by $m_{s,t}$ whenever $m_{s,t} \ge 4$.

To a Coxeter graph $\Gamma$, we associate an \emph{Artin group} $A[\Gamma]$ and a \emph{Coxeter group} $W[\Gamma]$.
These groups are defined as follows.
Given two letters $s,t$ and an integer $m \ge 2$, we denote by $\Pi(s,t,m)$ the alternating product $sts \cdots$ of length $m$.
Let $M = (m_{s,t})_{s,t \in S}$ be the Coxeter matrix of $\Gamma$.
Then
\[ 
A [\Gamma] = \langle S \mid \Pi (s,t, m_{s,t}) = \Pi (t,s, m_{s,t})\text{ for } s,t \in S\,,\ s \neq t\,,\ m_{s,t} \neq \infty \rangle \,.
\] 
The group $W [\Gamma]$ is the quotient of $A [\Gamma]$ by the relations $s^2=1$, for all $s \in S$.

The \emph{set of reflections} of $W[\Gamma]$ is $T = \{ w s w^{-1} \mid w \in W[\Gamma]\,,\ s \in S\}$.
Since $T$ generates $W[\Gamma]$, we can consider the word-length $\ell_T: W[\Gamma] \to \N$ with respect to $T$.
The \emph{absolute order} on $W[\Gamma]$ is the partial order $\preceq$ defined by $u \preceq v$ if $\ell_T(v) = \ell_T(u) + \ell_T(u^{-1} v)$.
For $w \in W[\Gamma]$, we set $[1,w] = \{ u \in W[\Gamma] \mid u \preceq w \}$.

The \emph{interval group} associated with an element $w \in W [\Gamma]$ is the group $G([1,w])$ defined by the following presentation.
The generating set of $G([1,w])$ is an abstract set $D = \{ d_u \mid u \in [1,w] \}$ in one-to-one correspondence with $[1,w]$.
The set of defining relations is:
\[
\{ d_u d_v = d_{uv} \mid u,v, uv \in [1,w] \text{ and } \ell_T(u) + \ell_T(v) = \ell_T(uv) \}\,.
\]

Recall that a \emph{Coxeter element} is a product in $W[\Gamma]$ of all the elements of $S$ in some order.
Interval groups were introduced by Bessis \cite{Bes03} in the case where $w=c$ is a Coxeter element.
In this case, the group $G([1,c])$ is called a \emph{Coxeter interval group} or a \emph{dual Artin group}.
These groups play a central role in the study of Artin groups, particularly in connection with the $K(\pi,1)$ conjecture (see \cite{PS21,DPS24}).
We refer to \cite{Pao25} for a discussion of the ``Coxeter interval group'' approach to the study of the $K(\pi,1)$ conjecture. 

Two central questions about interval groups are the following: 
\begin{itemize}
\item[(1)] 
Is $G([1,w])$ isomorphic to $A[\Gamma]$? 
\item[(2)] 
Is $G([1,w])$ a Garside group? 
\end{itemize} 
These two questions are generally related. 
The classical case is when $w=c$ is a Coxeter element and $\Gamma$ is of spherical type (i.e., $W[\Gamma]$ is finite). 
In this case, the answer to both questions is yes (see \cite{Bes03, BW08}). 
Other known cases in which $w=c$ is a Coxeter element and the answer to both questions is yes are universal Coxeter groups (i.e., with $m_{s,t} = \infty$ for all distinct $s,t \in S$) (see \cite{Bes06}) and Coxeter groups of rank $3$ (i.e., with $|S|=3$) (see \cite{DPS24}). 
On the other hand, both questions are open when $\Gamma$ is right-angled (i.e., with $m_{s,t} \in \{2, \infty\}$ for all distinct $s,t \in S$). 
The case where $\Gamma$ is of affine type and $w=c$ is a Coxeter element is more tricky. 
In this case, $G([1,c])$ is known to be isomorphic to $A[\Gamma]$, but it is not always a Garside group. 
However, it always embeds into a Garside group \cite{Dig06, Dig12, McS17, PS21}.

An element $w \in W [\Gamma]$ is called a \emph{quasi-Coxeter element} if $\ell_T (w) = |S|$ and there exists an expression $w=t_1 t_2 \cdots t_n$ of $w$ over $T$ such that $n=|S|$ and $\{t_1, \dots, t_n\}$ generates $W [\Gamma]$.
Every Coxeter element is a quasi-Coxeter element, but the converse does not hold in general.
A quasi-Coxeter element that is not conjugate to a Coxeter element is called a \emph{proper quasi-Coxeter element}.

Quasi-Coxeter elements have been studied primarily in the case where $\Gamma$ is of spherical type.
In this case, they exhibit behaviour similar to that of Coxeter elements (see \cite{Car72, BDSW14, BGRW17}).
For example, the Hurwitz action on the set of the reduced reflection factorisations is transitive.
It is therefore natural to study interval groups associated with quasi-Coxeter elements after studying those associated with Coxeter elements.
Note that there are no proper quasi-Coxeter elements in Coxeter groups of type $A_n$, $B_n$ and $I_2(m)$, hence the first interesting case is $W[D_n]$ with $n \ge 4$, which contains $\lfloor (n-1)/2 \rfloor$ conjugacy classes of proper quasi-Coxeter elements \cite{BGRW17}.

An interval group associated with a (proper) quasi-Coxeter element is called a \emph{(proper) quasi-Coxeter interval group}.
Presentations for all quasi-Coxeter interval groups of finite Coxeter groups have been computed in \cite{BNR23} and \cite{BHNR23a}.
Furthermore, it is shown in \cite{BHNR23b} that a proper quasi-Coxeter interval group of type $D_n$ is not isomorphic to $A[D_n]$, thereby providing a negative answer to Question (1) in this case.
However, it is still unknown whether a proper quasi-Coxeter interval group of type $D_n$ is a Garside group or embeds into a Garside group.

In \cite{CP05}, it is proved that the Artin group $A[D_n]$ is the semidirect product of the braid group $\BB_n$ on $n$ strands with a free group of rank $n-1$ (see also \cite{PV96,All02}).
Let $w$ be a proper quasi-Coxeter element of $W[D_n]$.
Although it is known that $G([1,w])$ is not isomorphic to $A[D_n]$,
it is natural to ask whether a decomposition exists for $G([1,w])$ that is similar to the one for $A[D_n]$.
An affirmative answer would, in particular, make it possible to solve some open combinatorial questions on $G ([1,w])$, such as the word problem.
Unfortunately, a direct consequence of our main result is that no such decomposition exists.
Our main result is the following.

\begin{thm}\label{thm1_1}
Let $n \ge 5$, and let $w$ be a proper quasi-Coxeter element of $W [D_n]$.
Then $G([1,w])$ admits no surjective homomorphism onto the braid group $\BB_n$ on $n$ strands.
\end{thm}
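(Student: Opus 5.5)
We argue by contradiction. Suppose $\varphi : G([1,w]) \to \BB_n$ is surjective. We use the explicit presentations of proper quasi-Coxeter interval groups of type $D_n$ from \cite{BNR23,BHNR23a}. These presentations are generated by $n$ elements $s_1,\dots,s_n$ corresponding to a reduced reflection factorisation of $w$. Their relations are braid-type relations ($s_is_j=s_js_i$, or $s_is_js_i=s_js_is_j$), organised along a graph that contains a cycle, together with an extra ``cycle'' relation of the form $[s_a, s_b s_c s_b^{-1}]=1$ or an analogous twisted-cycle relation.

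The key structural input is a classification of homomorphisms from Artin-type groups onto $\BB_n$. From the work of Castel and of Chen--Kordek--Margalit, for $n\ge 5$ any homomorphism from $\BB_n$ to $\BB_n$ is either cyclic, or a transvection of an automorphism. We will use the mapping class group version: view $\BB_n$ as $\MM(\D_n)$, the mapping class group of the $n$-punctured disc. Then a pair of elements $x,y$ satisfying $xyx=yxy$ and generating a large enough subgroup must act as half-twists along arcs, up to multiplication by central elements.

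First, we show that the images $\varphi(s_i)$ are pairwise conjugate in $\BB_n$. Any two $s_i$ joined by a braid edge are conjugate in $G([1,w])$, and the graph is connected. Since the images generate $\BB_n$, which is non-abelian, the common conjugacy class is not central. Second, we determine this conjugacy class. The sub-presentation on a spanning path of type $A_{n-1}$ (or a large chain inside it) gives a homomorphism $\BB_{k}\to\BB_n$ with $k$ close to $n$. By the classification of such homomorphisms (up to transvection by the center $\langle \Delta^2\rangle$), the images of chain generators are $\sigma$-like half-twists times a common central power $\Delta^{2p}$. Pulling back via the total exponent/abelianisation $G([1,w])\to\Z$ fixes $p$, and we may assume each $\varphi(s_i)$ is a half-twist $\tau_{\alpha_i}$ along an arc $\alpha_i$ in $\D_n$. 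Third, we translate the relations into arc geometry. Commuting half-twists correspond to disjoint arcs (disjoint endpoints as well). Braid relations correspond to arcs meeting in exactly one endpoint and otherwise disjoint, up to isotopy. So the arcs $\alpha_1,\dots,\alpha_n$ realise the relation graph of the presentation, which contains a cycle. We then need $n$ arcs between $n$ punctures forming a graph with a cycle whose half-twists generate $\BB_n$. Generation forces the union of arcs to be connected and to span all $n$ punctures, so it is a connected graph with $n$ vertices and $n$ edges, hence a single cycle with trees attached.

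The final step, which we expect to be the main obstacle, is to use the extra cycle relation to reach a contradiction. For arcs forming an embedded cycle, $\tau_{\alpha_b}\tau_{\alpha_c}\tau_{\alpha_b}^{-1}$ is the half-twist along $\tau_{\alpha_b}(\alpha_c)$. The relation in the presentation then demands that this arc be disjoint from $\alpha_a$ (or satisfy a specific intersection pattern), and we will compute via intersection numbers, or via the action on $\pi_1$, that this fails. The difficulty is the careful case analysis over the $\lfloor (n-1)/2\rfloor$ conjugacy classes, and over the possible embeddings of the cycle of arcs in the disc, since the cycle may enclose different sets of punctures. We also need $n\ge5$ to apply the rigidity classification. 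Once each case yields an incompatible arc configuration, no surjection exists.
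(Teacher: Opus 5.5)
There is a genuine gap in your third step, and the final step is not carried out.

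\textbf{The commutation dictionary is incomplete.} You translate ``commuting half-twists'' as ``disjoint arcs''. But $\sigma_\alpha$ and $\sigma_\beta$ also commute when $\alpha=\beta$, and that is exactly what happens here.

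In the presentation $Q_{n,p}$ of Theorem~\ref{thm3_1}, the cycle in the relation graph is $s_{p-1},s_p,s_{p+1},s_p'$, with $s_ps_p'=s_p's_p$. Normalise the spanning path $s_1,\dots,s_{p-1},s_p,s_{p+1},\dots,s_{n-1}$ so that $\varphi(s_i)=t_i^{\varepsilon}=\sigma_{e_i}^{\varepsilon}$. The remaining relations then force $\varphi(s_p')=\varphi(s_p)$; this is the paper's Claim~\ref{claim5}.

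For example, take $3\le p\le n-3$. The arc of $\varphi(s_p')$ must be disjoint from $e_i$ for every $i\notin\{p-1,p,p+1\}$; equality with such an $e_i$ is excluded by the braid relations with $s_{p\pm1}$. So its endpoints are $p_p$ and $p_{p+1}$. It therefore cannot be disjoint from $e_p$, so it equals $e_p$.

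Consequently the arcs do not form a connected graph with $n$ vertices, $n$ edges and a cycle. They form the path $e_1,\dots,e_{n-1}$ with $e_p$ used twice.

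A sanity check shows your picture cannot be right. $A[\Omega_{n,p}]$ surjects onto $\BB_n$ via $s_p,s_p'\mapsto t_p$, so the Artin-type relations alone can never contradict surjectivity. Yet with your dictionary they already do: for $3\le p\le n-3$ the endpoint count above leaves no admissible endpoints for a disjoint arc. Any contradiction you extract from an ``embedded cycle of arcs'' therefore concerns a configuration that never occurs. The one configuration that does occur is left untreated.

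\textbf{The step you flag as the main obstacle is mis-targeted and never done.} No case analysis over conjugacy classes or embeddings is needed, because \cite{BNR23} gives the single uniform family $Q_{n,p}$. The correct endgame is short. Take $\varepsilon=1$ and $\varphi(s_p')=\varphi(s_p)=t_p$. The extra relation $[s_{p-1},s_p^{-1}s_{p+1}s_p's_{p+1}^{-1}s_p]=1$ would force $t_{p-1}^2=\tau_{b_{p-1}}$ to commute with $\tau_c$, where $[c]=\sigma_{e_p}^{-1}\sigma_{e_{p+1}}([b_p])$. But $i([b_{p-1}],[c])=4\neq 0$, so they do not commute.

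\textbf{Two smaller points.}
\begin{itemize}
\item The rigidity theorem is about $\BB_n\to\BB_n$. Your fallback ``or a large chain inside it'', giving $\BB_k\to\BB_n$ with $k<n$, would not suffice. Fortunately $\Omega_{n,p}$ contains a full $A_{n-1}$ path.
\item Non-centrality of the common conjugacy class does not show that the restriction to that path is non-cyclic. You need the argument of Claim~\ref{claim1}.
\end{itemize}

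\textbf{What is worth keeping.} Using conjugacy of all generators to see that $\varphi(s_p')$ is itself a half-twist to the power $\varepsilon$ is a legitimate idea. Combined with the corrected dictionary ``disjoint \emph{or equal}'' (which follows from Propositions~\ref{prop2_3} and~\ref{prop2_8}), it gives a shorter replacement for the paper's Claims~\ref{claim2}, \ref{claim4} and~\ref{claim5}.
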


Since $A [D_n]$ admits a surjective homomorphism onto $\BB_n$, this yields a new proof of the following result
whose proof can be found in Theorem 1.3 of \cite{BHNR23b} together with Theorem A of \cite{BNR23}.

\begin{corl}[Baumeister--Holt--Neaime--Rees \cite{BNR23,BHNR23b}]\label{corl1_2}
Let $n \ge 5$, and let $w$ be a proper quasi-Coxeter element of $W [D_n]$.
Then $G([1,w])$ is not isomorphic to $A [D_n]$.
\end{corl}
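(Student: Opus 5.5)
The plan is to derive the statement directly from Theorem~\ref{thm1_1}, which I take as established. I will produce an explicit surjective homomorphism $\rho\colon A[D_n]\to\BB_n$. Then I argue by contradiction: an isomorphism $\varphi\colon G([1,w])\to A[D_n]$ would make $\rho\circ\varphi$ a surjection $G([1,w])\to\BB_n$, which Theorem~\ref{thm1_1} rules out for $n\ge5$ and $w$ proper quasi-Coxeter.

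To construct $\rho$, label the Coxeter graph $D_n$ by vertices $s_1, s_1', s_2,\dots,s_{n-1}$. Here $s_2,\dots,s_{n-1}$ form a path, and $s_1,s_1'$ are both joined to $s_2$ and not to each other, nor to any other $s_i$. I present $\BB_n$ by the standard generators $\sigma_1,\dots,\sigma_{n-1}$, with the braid relation between consecutive generators and commutation between generators whose indices differ by at least $2$. Define $\rho$ on generators by $\rho(s_1)=\rho(s_1')=\sigma_1$ and $\rho(s_i)=\sigma_i$ for $2\le i\le n-1$.

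Next I check that $\rho$ respects the defining relations of $A[D_n]$:
\begin{itemize}
\item $s_1s_1'=s_1's_1$ maps to $\sigma_1\sigma_1=\sigma_1\sigma_1$, which holds trivially.
\item $s_1s_2s_1=s_2s_1s_2$ and $s_1's_2s_1'=s_2s_1's_2$ both map to the braid relation between $\sigma_1$ and $\sigma_2$.
\item $s_1$ and $s_1'$ commute with $s_i$ for $i\ge3$; the images give $\sigma_1\sigma_i=\sigma_i\sigma_1$, which holds.
\item The relations among $s_2,\dots,s_{n-1}$ map exactly to the corresponding braid relations.
\end{itemize}
So $\rho$ is a well-defined homomorphism. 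It is surjective because every standard generator $\sigma_i$ lies in its image. This is the quotient map of the semidirect product decomposition $A[D_n]\cong F_{n-1}\rtimes\BB_n$ of \cite{CP05}, but the direct verification above is all that is needed.

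Finally, I combine the two facts. Suppose $\varphi\colon G([1,w])\to A[D_n]$ were an isomorphism. Then $\rho\circ\varphi\colon G([1,w])\to\BB_n$ would be surjective, contradicting Theorem~\ref{thm1_1}. Hence $G([1,w])\not\cong A[D_n]$. There is no real obstacle here: all the depth lies in Theorem~\ref{thm1_1}. The only point needing care is the relation check for $\rho$, in particular that sending both $s_1$ and $s_1'$ to $\sigma_1$ is compatible with $s_1$ and $s_1'$ commuting, which it is.
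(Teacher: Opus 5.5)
Your proposal is correct and follows the paper's argument exactly: the paper also deduces the corollary from Theorem~\ref{thm1_1} via a surjection $A[D_n]\to\BB_n$, asserting its existence with reference to the Crisp--Paris decomposition. Your explicit map, sending both fork generators to $\sigma_1$ and each $s_i$ to $\sigma_i$, supplies that surjection, and your check of the defining relations is correct.
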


\begin{rem}
The arguments used in the proof of Theorem \ref{thm1_1} require $n \ge 5$, hence they do not apply to the case $n=4$. 
Actually, we do not known whether Theorem \ref{thm1_1} can be extended to $n=4$.
However, Corollary \ref{corl1_2} is proved in \cite{BNR23,BHNR23b} for all $n \ge 4$, rather than only for $n \ge 5$. 
\end{rem}

The interest of the paper also lies, and perhaps primarily so, in the proof of Theorem \ref{thm1_1} itself, which relies on tools from low-dimensional topology.
More precisely, we make use of several classical results on mapping class groups applied to the mapping class group of the $n$-punctured disc, which is isomorphic to the braid group $\BB_n$.
The use of these methods is new in the study of interval groups, and the authors assume that a potential reader would not be familiar with this theory, hence a large part of the paper (namely, the entirety of Section \ref{sec2}) provides an exposition on mapping class groups.
Once these tools are in place, the proof of Theorem \ref{thm1_1} follows directly from the presentations of quasi-Coxeter interval groups of type $D_n$ given in \cite{BNR23}, together with results on endomorphisms of braid groups by Castel \cite{Cas16} and Chen--Kordek--Margalit \cite{CKM19}.
Section \ref{sec3} is devoted to the proof of Theorem \ref{thm1_1}.

\begin{acknow}

The first author is partially supported by the German Research Foundation
SFB-TRR 358/1 2023 – 491392403.
The second author is partially supported by the French project ``CaGeT'' (ANR-25-CE40-4162) of the ANR.
The IMB receives support from the EIPHI Graduate School (contract ANR-17-EURE-0002).
\end{acknow}


\section{Preliminaries on mapping class groups}\label{sec2}

As pointed out in the introduction, in our proof of Theorem \ref{thm1_1} we use tools from the theory of mapping class groups. 
In these notes we provide the information on these that we shall need and refer to
\cite{FM12} for a detailed account of the subject.

Let $\Sigma$ be a compact oriented surface with or without boundary and let $\PP$ be a finite family of points in the interior of $\Sigma$. 
We denote by $\Homeo^+ (\Sigma, \PP)$ the group of homeomorphisms of $\Sigma$ that preserve orientation, that act as the identity on a neighbourhood of the boundary, and that leave the set $\PP$ fixed as a set.
The \emph{mapping class group} of the pair $(\Sigma, \PP)$, denoted by $\MM (\Sigma, \PP)$, is the group of isotopy classes of elements of $\Homeo^+ (\Sigma, \PP)$.

Two kinds of elements of $\MM (\Sigma, \PP)$ are of interest to us: the \emph{Dehn twists} and the \emph{half-twists}. 
We shall start by introducing the Dehn twists.

Let $\A = [0,1] \times \S^1$ be the standard annulus. 
The \emph{standard Dehn twist} is the homeomorphism $T : \A \to \A$ defined by
\[
T (t,z) = (t, e^{2i t \pi} z)\,.
\]
This homeomorphism is shown in Figure \ref{fig2_1}.

\begin{figure}[ht!]
\begin{center}
\includegraphics[width=4cm]{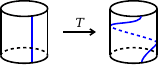}
\caption{Standard Dehn twist.}\label{fig2_1}
\end{center}
\end{figure}

A \emph{simple closed curve} in $(\Sigma, \PP)$ is an embedding $a : \S^1 \to \Sigma$ whose image does not intersect either $\partial \Sigma$ or $\PP$. 
We say that $a$ is  \emph{essential} if its image does not bound any disc embedded in $\Sigma$ that contains $0$ or $1$ elements of $\PP$. 
Let $a : \S^1 \to \Sigma$ be a simple and essential closed curve. 
We choose an embedding $\hat a : \A \to \Sigma$ whose image does not intersect either $\partial \Sigma$ or $\PP$, and such that $\hat a (\frac{1}{2},z) = a(z)$ for each $z \in \S^1$.
We define $T_a \in \Homeo^+ (\Sigma, \PP)$ as follows. 
Suppose that $x \in \Sigma$.
\begin{itemize}
\item[(1)]
If $x$ is within the image of $\hat a$, with $x = \hat a (y)$, then $T_a (x) = \hat a (T(y))$, where $T$ denotes the standard Dehn twist.
\item[(2)]
If $x$ is outside the image of $\hat a$, then $T_a (x) = x$.
\end{itemize}
The \emph{Dehn twist} along $a$, denoted by $\tau_a$, is the element of $\MM (\Sigma, \PP)$ represented by $T_a$ (see Figure~\ref{fig2_2}).

\begin{figure}[ht!]
\begin{center}
\includegraphics[width=10.6cm]{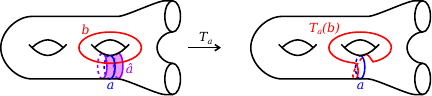}
\caption{Dehn twist.}\label{fig2_2}
\end{center}
\end{figure}

The following result guarantees that $\tau_a$ is well defined as an element of the mapping class group.

\begin{prop}\label{prop2_1}\leavevmode
\begin{itemize}
\item[(1)] 
The definition of $\tau_a$ does not depend on the image of $a$, or on the mapping $a$, or on its orientation, or on the choice of $\hat a$.
\item[(2)]
If $a$ and $b$ are isotopic simple and essential closed curves, then $\tau_a = \tau_b$.
\end{itemize}
\end{prop}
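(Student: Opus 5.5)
The plan is to treat the two parts separately, reducing everything to elementary facts about homeomorphisms of the annulus and isotopies of embeddings.

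\textbf{Independence of $\hat a$.} Take two embeddings $\hat a, \hat a' : \A \to \Sigma$ that both restrict to $a$ on $\{\frac12\}\times\S^1$. I would use the uniqueness of tubular (regular) neighbourhoods. After shrinking both annuli towards the core curve, we get an isotopy, supported away from $\partial\Sigma \cup \PP$ and fixing $a$ pointwise, that carries $\hat a$ to a map agreeing with $\hat a'$ up to a homeomorphism $\phi$ of $\A$. This $\phi$ fixes the core circle, and since $\Sigma$ is oriented it preserves the two sides.

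Shrinking is harmless for the following reason. If $\hat a_s(t,z) = \hat a(\frac12 + s(t-\frac12), z)$, then $\hat a_s \circ T \circ \hat a_s^{-1}$ differs from $\hat a \circ T \circ \hat a^{-1}$ by an isotopy. This comes from the straight-line homotopy between the twist profiles $t \mapsto 2\pi t$ and a steeper profile concentrated near $\frac12$. Such a homotopy stays inside the annulus and through homeomorphisms, since every profile is monotone from $0$ to $2\pi$.

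It therefore suffices to check that $\phi\circ T\circ\phi^{-1}$ is isotopic to $T$ rel $\partial\A$ whenever $\phi$ is an orientation-preserving homeomorphism of $\A$ preserving each boundary component. This follows from the classical fact that the mapping class group of the annulus rel boundary is $\Z$, generated by $T$ and hence abelian. Conjugating by a fixed homeomorphism defined near the boundary therefore does not change the class.

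\textbf{Independence of the parametrisation and orientation of $a$.} Reparametrising $a$ by a rotation $z\mapsto \lambda z$ changes $\hat a$ by the map $(t,z)\mapsto(t,\lambda z)$, which commutes with $T$. A general orientation-preserving reparametrisation of $\S^1$ is isotopic to a rotation, and this is absorbed by the previous step.

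Reversing the orientation, for example by precomposing with $z\mapsto\bar z$, forces us to also flip $t\mapsto 1-t$ so that the annulus embedding keeps its orientation. Indeed, the convention that $\hat a$ should respect orientations is implicit in having a well-defined twist direction. The key check is then
\[ (1-t, \overline{e^{2i\pi t}\bar z}) \text{ versus } (1-t, e^{2i\pi(1-t)} z), \]
and these differ by the factor $e^{2i\pi}$, i.e. they agree. So the twist is left-handed (respectively right-handed) independently of orientation.

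Independence of the image is contained in part (2).

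\textbf{Part (2).} If $a$ and $b$ are isotopic through simple closed curves avoiding $\PP \cup \partial\Sigma$, the isotopy extension theorem gives an ambient isotopy $H_s \in \Homeo^+(\Sigma,\PP)$ with $H_0=\mathrm{id}$ and $H_1\circ a=b$; the extension can be taken to fix $\PP$ and a neighbourhood of $\partial\Sigma$. Then $H_1\circ\hat a$ is a valid choice of $\hat b$, so $T_b = H_1 T_a H_1^{-1}$. The path $s \mapsto H_s T_a H_s^{-1}$ is an isotopy from $T_a$ to $T_b$ in $\Homeo^+(\Sigma,\PP)$. Combined with part (1), this gives $\tau_a=\tau_b$.

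\textbf{Main obstacle.} I expect the main obstacle to be the uniqueness of annular neighbourhoods in the topological (rather than smooth) category, together with the fact that $\MM(\A)\cong\Z$. I would cite \cite{FM12} for both, working with smooth representatives, since every homeomorphism of a surface is isotopic to a diffeomorphism.
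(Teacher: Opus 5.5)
The paper gives no proof of this proposition. It is stated as a standard fact, with only the general reference to \cite{FM12} for background, so there is no competing route to compare with. Your argument is the usual one: shrink to a common annulus, compare the two annuli by a homeomorphism $\phi$ of $\A$, use $\MM(\A)\cong\Z$, and use isotopy extension for part (2). It is correct in outline, but two points should be tightened.

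\textbf{The orientation convention is essential.} With $r(t,z)=(1-t,z)$, the embedding $\hat a\circ r$ also satisfies $(\hat a\circ r)(\frac12,z)=a(z)$. Yet $rTr^{-1}(t,z)=(t,e^{-2i\pi t}z)$, so $rTr^{-1}=T^{-1}$, and this choice produces $\tau_a^{-1}$. Without the convention, the statement is false. You do state this convention later, but it should be imposed from the start. In the independence-of-$\hat a$ step, it is exactly this convention, applied to both $\hat a$ and $\hat a'$, that makes $\phi$ orientation-preserving and side-preserving. Orientability of $\Sigma$ alone does not.

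\textbf{The conjugation step needs one more line.} Commutativity of $\MM(\A)$ only gives $\phi T\phi^{-1}\simeq T$ when $\phi$ fixes $\partial\A$, and your $\phi$ need not. The fix is easy. Since $T$ is the identity on $\partial\A$, the conjugates $\phi_sT\phi_s^{-1}$ fix $\partial\A$ pointwise for any path $\phi_s$ of homeomorphisms. The restrictions of $\phi$ to the two boundary circles are orientation-preserving circle homeomorphisms, hence isotopic to the identity. So you can first isotope $\phi$, through a collar, to a homeomorphism fixing $\partial\A$, and only then invoke commutativity.
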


Let $\D = \{ z \in \C \mid |z| \le 1\}$ be the standard disc and $\PP_2 = \{\frac{-1}{2}, \frac{1}{2} \}$.
The \emph{standard half-twist} is the homeomorphism $S \in \Homeo^+ (\D, \PP_2)$ defined by:
\[
S(z) = e^{-2i \pi |z|} z\,.
\]
Note that this homeomorphism interchanges the two points of $\PP_2$.
It is illustrated in Figure \ref{fig2_3}.

\begin{figure}[ht!]
\begin{center}
\includegraphics[width=5.6cm]{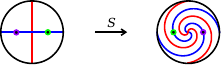}
\caption{Standard half-twist.}\label{fig2_3}
\end{center}
\end{figure}

An \emph{arc} in $(\Sigma, \PP)$ is an embedding $a : [0,1] \to \Sigma$ such that $a(0), a(1) \in \PP$, $a((0,1)) \cap \PP = \emptyset$, and $a([0,1]) \cap \partial \Sigma = \emptyset$. 
Let $a : [0,1] \to \Sigma$ be an arc. 
We choose an embedding $\hat a : \D \to \Sigma$ such that $\hat a (\D) \cap \PP = \{ a(0), a(1) \}$, $\hat a (\D) \cap \partial \Sigma = \emptyset$, and $\hat a (t-\frac{1}{2}) = a(t)$ for each $t \in [0,1]$. 
We define $S_a \in \Homeo^+ (\Sigma, \PP)$ as follows. 
Suppose that $x \in \Sigma$. 
\begin{itemize}
\item[(1)]
If $x$ is in the image of $\hat a$, with $x = \hat a (y)$, then $S_a (x) = \hat a (S(y))$, where $S$ denotes the standard half-twist.
\item[(2)]
If $x$ is outside the image of $\hat a$, then $S_a (x) = x$.
\end{itemize}
The \emph{half-twist} along $a$, denoted by $\sigma_a$, is the element of $\MM (\Sigma, \PP)$ represented by $S_a$ (see Figure~\ref{fig2_4}).

\begin{figure}[ht!]
\begin{center}
\includegraphics[width=10.2cm]{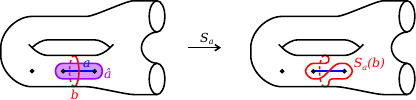}
\caption{Half-twist.}\label{fig2_4}
\end{center}
\end{figure}

Note that $S_a$ interchanges the two extremities of $a$, and, as for Dehn twists, the element $\sigma_a$ is well defined, due to the following result.

\begin{prop}\label{prop2_2}\leavevmode
\begin{itemize}
\item[(1)] 
The definition of $\sigma_a$ only depends on the image of $a$, and not on the mapping  $a$, or on its orientation, or on the choice of $\hat a$.
\item[(2)]
If $a$ and $b$ are two isotopic arcs, then $\sigma_a = \sigma_b$.
\end{itemize}
\end{prop}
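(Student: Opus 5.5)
The plan is to argue directly from the topological definition of $\sigma_a$, in the same way as for Dehn twists in Proposition \ref{prop2_1}. Everything reduces to two standard facts about homeomorphisms of the disc:
\begin{itemize}
\item[(i)] the Alexander trick;
\item[(ii)] the isotopy extension theorem, which makes the space of embeddings of a disc, suitably rel the marked points, connected.
\end{itemize}

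\textbf{Reparametrisation.} First I will show that $\sigma_a$ does not depend on the parametrisation of $a$ or on its orientation.
\begin{itemize}
\item If $a'=a\circ\phi$ with $\phi:[0,1]\to[0,1]$ an increasing homeomorphism, extend $\phi$ to a homeomorphism $\Phi$ of $\D$ fixing $\PP_2$, equal to the identity near $\partial\D$ and isotopic to the identity rel $\PP_2\cup\partial\D$. Then $\hat a\circ\Phi$ is an admissible choice for $a'$.
\item Reversing the orientation of $a$ corresponds to precomposing $\hat a$ with the rotation $z\mapsto -z$. This rotation commutes with $S$, since $S$ is radial multiplication by $e^{-2i\pi|z|}$.
\end{itemize}
So both cases reduce to the next step.

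\textbf{Change of $\hat a$ (the core step).} Let $\hat a_0,\hat a_1:\D\to\Sigma$ be two admissible embeddings extending the same arc. The key observation is that $S_a$ is supported in $\hat a(\D)$ and equals a rigid rotation by $\pi$ near $\partial\D$, hence near $\hat a(\partial\D)$. First I will compare $S$ with a version supported in a smaller concentric disc $\D_r$ containing $\PP_2$: the two differ by a homeomorphism supported in the annulus $\D\setminus\D_r$ which is isotopic to the identity there, via the obvious radial interpolation of $S(z)=e^{-2i\pi|z|}z$. This shows that only the germ of $\hat a$ along $[-\tfrac12,\tfrac12]$ matters.

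Next I will shrink both embeddings to thin regular neighbourhoods of the arc. Two such neighbourhoods, each parametrised so that it restricts to $a$ on the segment, differ by a homeomorphism of $\D$ that fixes the segment pointwise and preserves orientation, since both embeddings are orientation-compatible; if $\hat a_1$ reverses orientation, precompose with complex conjugation, which I expect to be excluded by orientation conventions. By the Alexander trick applied on the complementary half-discs of $\D$ cut along the segment, this homeomorphism is isotopic to the identity rel the segment. Conjugating $S$ by it therefore gives an isotopic homeomorphism, so $S_{a}$ built with $\hat a_0$ is isotopic to $S_a$ built with $\hat a_1$ in $\Homeo^+(\Sigma,\PP)$.

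\textbf{Part (2).} If $a_t$ is an isotopy of arcs with endpoints fixed in $\PP$, the isotopy extension theorem gives an ambient isotopy $H_t\in\Homeo^+(\Sigma,\PP)$ with $H_0=\mathrm{id}$ and $H_1\circ a_0=a_1$. Then $H_1\circ\hat a_0$ is admissible for $a_1$, so $S_{a_1}=H_1 S_{a_0}H_1^{-1}$. Since $H_1$ is isotopic to the identity, $\sigma_{a_1}=\sigma_{a_0}$.

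The main obstacle is the rigorous uniqueness of $\hat a$ up to isotopy, that is, the shrinking/regular-neighbourhood argument in the core step. I would cite \cite{FM12} for uniqueness of regular neighbourhoods rather than reprove it.
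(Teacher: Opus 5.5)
The paper gives no proof of this proposition; it quotes it as a standard fact, with \cite{FM12} as the general reference, so your argument has to stand on its own. Your overall plan is the standard one, and several parts are fine: the reparametrisation step, the reversal of $a$ (the map $z\mapsto -z$ does commute with $S$), and part (2) via isotopy extension and conjugation by $H_1$. Citing uniqueness of regular neighbourhoods is also acceptable at the level of this paper.

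The core step, however, rests on two incorrect claims.

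\textbf{The behaviour of $S$ near $\partial\D$.} $S$ is not a rotation by $\pi$ near $\partial\D$. Since $S(z)=e^{-2i\pi|z|}z$, the rotation angle is $-2\pi|z|$. This equals $-\pi$ on the circle $|z|=\frac12$ through $\PP_2$, and $-2\pi$, i.e.\ nothing, on $\partial\D$. So $S|_{\partial\D}=\mathrm{id}$; otherwise $S_a$, extended by the identity, would not even be continuous. This is exactly the property your conjugation argument needs. It guarantees that for an isotopy $h_t$ of $\D$ which moves $\partial\D$, the conjugates $h_tSh_t^{-1}$ are still the identity on $\partial\D$, and hence extend by the identity to an isotopy in $\Homeo^+(\Sigma,\PP)$. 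With your stated ``key observation'' that step breaks down.

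Relatedly, a version of $S$ supported in $\D_r$ does not differ from $S$ by something supported in $\D\setminus\D_r$. Instead, take $S_r(z)=e^{-2i\pi\rho(|z|)}z$ with $\rho(s)=s$ for $s\le\frac12$ and $\rho(s)=1$ for $s\ge r$. Then $S_r^{-1}S(z)=e^{-2i\pi f(|z|)}z$ with $f=0$ on $[0,\frac12]$ and $f(1)=0$. This map is isotopic to the identity rel $\D_{1/2}\cup\partial\D$ through $z\mapsto e^{-2i\pi uf(|z|)}z$, $u\in[0,1]$.

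\textbf{The Alexander trick step.} The Alexander trick ``on the complementary half-discs of $\D$ cut along the segment'' is not available. The segment $[-\frac12,\frac12]$ lies in the interior of $\D$, so cutting along it produces an annulus, not two half-discs. Moreover, the comparison map $h=\hat a_1^{-1}\hat a_0$ (once the images coincide) fixes the segment but not $\partial\D$. So there is no region on whose whole boundary $h$ is the identity.

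The conclusion you want, $hSh^{-1}\simeq S$ rel $\partial\D\cup\PP_2$, is true but needs a different argument. For instance:
\begin{itemize}
\item[(i)] Isotope $h$ rel $\PP_2$, inside a collar of $\partial\D$, to some $h'$ with $h'|_{\partial\D}=\mathrm{id}$. This is possible because every orientation-preserving homeomorphism of $\S^1$ is isotopic to the identity.
\item[(ii)] Then $h_tSh_t^{-1}$ is an isotopy rel $\partial\D\cup\PP_2$ from $hSh^{-1}$ to $h'Sh'^{-1}$.
\item[(iii)] Finally $[h'Sh'^{-1}]=[h'][S][h']^{-1}=[S]$, because $\MM(\D,\PP_2)\cong\Z$ is abelian (Example~\ref{expl2_5}).
\end{itemize}
Equivalently, homeomorphisms of the cut annulus fixing the inner boundary pointwise form a connected group, since a Dehn twist can be undone by rotating the free outer boundary.

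\textbf{Orientation of $\hat a$.} Precomposing with $c(z)=\bar z$ does not reduce to the orientation-preserving case, because $cSc^{-1}=S^{-1}$. An orientation-reversing $\hat a$ yields $\sigma_a^{-1}$, so (1) holds only under the tacit convention that $\hat a$ preserves orientation.
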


Let $a : [0,1] \to \Sigma$ be an arc. 
As before, we choose an embedding $\hat a : \D \to \Sigma$ such that $\hat a (\D) \cap \PP = \{ a(0), a(1) \}$, $\hat a (\D) \cap \partial \Sigma = \emptyset$, and $\hat a (t-\frac{1}{2}) = a(t)$ for each $t \in [0,1]$. 
We denote by $b : \S^1 \to \Sigma$ the mapping defined by $b(z) = \hat a (z)$ (see Figure \ref{fig2_5}). 
It is clear that $b$ is a simple and essential closed curve if $\Sigma$ is not a sphere with fewer than $4$ marked points. 
It is called a \emph{regular boundary} of $a$. 

\begin{figure}[ht!]
\begin{center}
\includegraphics[width=2.6cm]{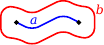}
\caption{Regular boundary of an arc.}\label{fig2_5}
\end{center}
\end{figure}

The elements $\sigma_a$ and $\tau_b$ are connected by the following relation.

\begin{prop}\label{prop2_3}
Under the above hypotheses, $\tau_b = \sigma_a^2$.
\end{prop}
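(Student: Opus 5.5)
The plan is to reduce the statement to a computation on the standard disc $\D$ with the two marked points $\PP_2$. Both $\sigma_a$ and $\tau_b$ are supported in the embedded disc $\hat a(\D)$. Moreover, $S_a$ and $T_b$ are defined as the identity outside the image of $\hat a$, or outside a thin annular neighbourhood of its boundary.

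\emph{Step 1: transport to the model.} By Proposition~\ref{prop2_1} and Proposition~\ref{prop2_2}, we are free to choose the auxiliary embeddings. We take the annulus defining $T_b$ to be a collar of $\partial \hat a(\D)$ inside $\hat a(\D)$. Concretely, we choose $\hat b : \A \to \Sigma$ with $\hat b(t,z) = \hat a(r(t) z)$, where $r$ is an increasing map from $[0,1]$ onto $[1-\epsilon, 1]$. Strictly, we want the curve $b$ in the middle of the collar, so we reparametrise slightly and use Proposition~\ref{prop2_1}(2) to allow an isotopy of $b$.

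Then $S_a = \hat a \circ S \circ \hat a^{-1}$ on $\hat a(\D)$ and the identity elsewhere, and likewise for $T_b$. It therefore suffices to prove $S^2 \simeq T_\partial$ in $\Homeo^+(\D,\PP_2)$, relative to a neighbourhood of $\partial \D$. Here $T_\partial$ is the model Dehn twist supported in the collar $1-\epsilon \le |z| \le 1$. Any isotopy in the model is the identity near $\partial\D$, so it extends by the identity to an isotopy in $\Sigma$ rel $\PP$ and $\partial\Sigma$.

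\emph{Step 2: the computation in $\D$.} We have $S^2(z) = e^{-4i\pi|z|}z$. This map rotates each circle $|z| = \rho$ by the angle $-4\pi\rho$, and it is a full rotation by $-2\pi$ (the identity) on $|z| = 1/2$, the circle through the marked points. So $S^2$ fixes $\PP_2$ pointwise.

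We isotope the angle function. Consider any continuous $\theta : [0,1] \to \R$ such that $\theta(1) \in 2\pi\Z$ and $\theta(1/2) \in 2\pi\Z$, with $\theta$ locally constant near $1$ and near $1/2$. The map $z \mapsto e^{i\theta(|z|)}z$ is then a homeomorphism fixing $\PP_2$ and the identity near the boundary. Fixing the values $\theta(1/2)$ and $\theta(1)$, a straight-line homotopy of such $\theta$ gives an isotopy rel $\PP_2$ and rel the boundary.

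Thus $S^2$ is isotopic to the map with $\theta(\rho) = -2\pi$ for $\rho \le 1-\epsilon$, where $\theta$ goes monotonically from $-2\pi$ to $-4\pi$ on the collar. Two remarks make this legitimate:
\begin{itemize}
\item We must first make $S^2$ the identity near $\partial \D$. Since $e^{-4i\pi}=1$, the angle is already in $2\pi\Z$ at $\rho=1$, and we adjust locally.
\item Points $z$ with $|z| = 1/2$ are fixed throughout, because only values in $2\pi\Z$ occur there. So we may equally move $\theta(1/2)$ within $2\pi\Z$? No: we keep $\theta \equiv -2\pi$ on $[0,1-\epsilon]$. The rotation by $-2\pi$ is the identity, so on $|z|\le 1-\epsilon$ the map is the identity and the marked points are untouched.
\end{itemize}
The resulting map is the identity off the collar and is a full twist on the collar, i.e.\ $T_\partial$. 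Equivalently it is $T_\partial^{-1}$, depending on orientation conventions.

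\emph{Main obstacle.} The main obstacle is the sign and orientation bookkeeping. We need to check that the twist on the collar, going from angle $-2\pi$ at the inner radius to $-4\pi$ at the outer radius, is the same as the standard $T(t,z) = (t, e^{2i\pi t}z)$ transported by $\hat b$. This requires choosing the parametrisation $t \mapsto$ radius in $\hat b$ correctly.

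Since Proposition~\ref{prop2_1}(1) says the twist is independent of the orientation of the curve and of the choice of $\hat b$, the handedness is intrinsic. One compares directly: the angle change across the collar is $-2\pi$ going outward, hence $+2\pi$ going inward. Parametrising $\hat b$ with $t=0$ on the outer circle and $t=1$ on the inner one matches $T$ exactly, up to the constant rotation by $-4\pi$, which is the identity. This convention check is where I would spend care; the rest is formal.
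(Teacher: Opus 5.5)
The paper states this proposition without proof, so there is no route to compare with. Your model computation is the natural argument, and its isotopy part is sound. The straight-line homotopy of angle profiles, with $\theta(1/2)=-2\pi$ and $\theta(1)=-4\pi$ held fixed, is an isotopy rel $\PP_2\cup\partial\D$ from $S^2$ to a map supported in the collar. Since $\hat a(\D)$ misses $\partial\Sigma$, working rel $\partial\D$ suffices for extension by the identity; you do not need rel a neighbourhood of $\partial\D$, which $S^2$ does not satisfy anyway. Replacing $b$ by an isotopic curve inside $\hat a(\D)$ is also fine.

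The gap is the sign, which is the only content beyond $\sigma_a^2=\tau_b^{\pm1}$. Proposition~\ref{prop2_1}(1) cannot mean that every embedding $\hat b$ with $\hat b(\frac12,z)=b(z)$ is allowed. With $R(t,z)=(1-t,z)$, the map $\hat b\circ R$ is another such embedding, and $RTR^{-1}=T^{-1}$. Likewise $z\mapsto\bar z$ fixes $[-\frac12,\frac12]$ and conjugates $S$ to $S^{-1}$. So both definitions tacitly require $\hat a$ and $\hat b$ to be orientation-preserving for fixed orientations of $\D$ and $\A$.

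Your step ``the handedness is intrinsic, so choose the parametrisation that matches $T$'' is therefore circular. Your own Step~1 choice ($r$ increasing, i.e.\ $t$ increasing outward) gives $\hat b T\hat b^{-1}$ equal to the inverse of your collar map, hence $\sigma_a^2=\tau_b^{-1}$. What must actually be checked is whether $(t,z)\mapsto(1-\epsilon t)z$ is an orientation-preserving map $\A\to\D$. That depends on the orientation of $\A$, which the text never fixes.

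If $\A$ is oriented so that $(t,z)\mapsto(1+t)z$ is orientation-preserving, your $\hat b$ is orientation-reversing. This is the convention of \cite{FM12}, whose twist formula coincides with $T$. The admissible choice is then $\hat b(t,z)=\hat a((1-\epsilon+\epsilon t)z)$, and your computation yields $\tau_b=\sigma_a^{-2}$. With the formulas as written, $S^2$ rotates clockwise going outward across the collar, while $T$ transported in this way rotates counterclockwise going outward.

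So you must state the orientation convention on $\A$ under which the proposition holds, namely the one making $T_b$ rotate clockwise going outward on the collar. You must also verify that your $\hat b$ is orientation-preserving for that convention; this is not cosmetic. For the paper's later uses, the weaker $\tau_b=\sigma_a^{\pm2}$ would in fact suffice: those uses are commutation and Corollary~\ref{corl2_11}, which only sees $|k_i|$.
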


Recall that the \emph{braid group} $\BB_n$ on $n$ strands is the Artin group $\BB_n = A [A_{n-1}]$ of the Coxeter graph $A_{n-1}$ shown in Figure \ref{fig2_6}. 
The standard generators of $\BB_n$ will always be denoted by $t_1, \dots, t_{n-1}$ as seen in the figure.

\begin{figure}[ht!]
\begin{center}
\includegraphics[width=4.4cm]{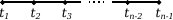}
\caption{Coxeter graph $A_{n-1}$.}\label{fig2_6}
\end{center}
\end{figure}

The example of a mapping class group that interests us in this paper is the one where $\Sigma = \D = \{ z \in \C \mid |z| \le 1\}$ is the standard disc and $\PP_n = \{p_1, \dots, p_n\}$ is a collection of $n$ points in the interior of $\D$ positioned as in Figure \ref{fig2_7}. 
Let $e_1, \dots, e_{n-1}$ be the arcs sketched in Figure \ref{fig2_7}. 
Then:

\begin{figure}[ht!]
\begin{center}
\includegraphics[width=5.6cm]{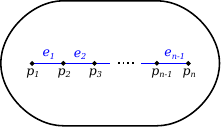}
\caption{Marked disc.}\label{fig2_7}
\end{center}
\end{figure}

\begin{thm}[Artin \cite{Art47}]\label{thm2_4}
Let $n \ge 1$ and $t_1, \dots, t_{n-1}$ be the standard generators of the braid group $\BB_n$.
We have an isomorphism $\BB_n \to \MM (\D, \PP_n)$ that maps $t_i$ to $\sigma_{e_i}$ for each $1 \le i \le n-1$.
\end{thm}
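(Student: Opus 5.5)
The plan is to construct a homomorphism $\Phi\colon \BB_n \to \MM(\D,\PP_n)$ with $\Phi(t_i)=\sigma_{e_i}$, prove that it is surjective, and then prove that it is injective.

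\textbf{Construction.} It suffices to check that the half-twists $\sigma_{e_1},\dots,\sigma_{e_{n-1}}$ satisfy the Artin relations of $A_{n-1}$.
\begin{itemize}
\item[(1)] If $|i-j|\ge 2$, the arcs $e_i$ and $e_j$ are disjoint. We choose the discs $\hat e_i(\D)$ and $\hat e_j(\D)$ disjoint, so $S_{e_i}$ and $S_{e_j}$ have disjoint supports and commute.
\item[(2)] For $j=i+1$, the arcs share exactly one endpoint. We check directly that $S_{e_i}(e_j)$ is isotopic to $S_{e_j}^{-1}(e_i)$, so that $\sigma_{e_i}\sigma_{e_j}\sigma_{e_i}^{-1}=\sigma_{e_j}^{-1}\sigma_{e_i}\sigma_{e_j}$. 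Here we use the naturality $f\sigma_a f^{-1}=\sigma_{f(a)}$, which follows from the definition together with Proposition \ref{prop2_2}. This is equivalent to the braid relation, and the whole computation takes place in a disc containing three marked points.
\end{itemize}

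\textbf{Surjectivity.} We argue by induction on $n$, using the Birman exact sequence coming from forgetting a marked point, or equivalently the action on $\PP_n$. Composing with the permutation map gives the surjection $\MM(\D,\PP_n)\to \mathfrak S_n$, since the $\sigma_{e_i}$ give adjacent transpositions. So it suffices to generate the subgroup of mapping classes fixing each $p_i$. For these we use the point-pushing sequence
\[
\pi_1(\D\setminus\PP_{n-1},p_n)\to \MM(\D,\PP_n)^{p_n}\to \MM(\D,\PP_{n-1}).
\]
The fundamental group is free, generated by loops around the $p_j$. Pushing $p_n$ around $p_j$ gives the Dehn twist about the boundary of an arc from $p_n$ to $p_j$. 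By Proposition \ref{prop2_3}, this equals $\sigma^2$ of that arc. Each such arc is a conjugate of some $e_k$ by a product of the $\sigma_{e_i}$, so these twists lie in the image. The case $n\le 2$ uses that $\MM$ of a disc, and of a disc with one point, is trivial by the Alexander lemma.

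\textbf{Injectivity.} This is the main obstacle. Here I would use the fibration $\mathrm{Homeo}^+(\D,\partial)\to \mathrm{Conf}_n(\mathrm{int}\,\D)$ given by evaluation on $\PP_n$. The Alexander trick shows that $\mathrm{Homeo}^+(\D,\partial)$ is contractible, so the long exact sequence gives $\MM(\D,\PP_n)\cong \pi_1(\mathrm{UConf}_n)$. It then remains to identify $\pi_1(\mathrm{UConf}_n)$ with $\BB_n$ via Artin's geometric braids, with the standard generators corresponding to $\sigma_{e_i}$. This identification is the classical result of Artin/Fox–Neuwirth, and I would cite it.

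Alternatively, injectivity can be shown by induction. We compare the point-pushing sequence above with the split exact sequence $1\to F_{n-1}\to P_n\to P_{n-1}\to 1$ for pure braids. The map on the free kernels is an isomorphism by the presentation, and we apply the five lemma.
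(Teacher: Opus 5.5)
The paper does not prove this statement; it quotes it from Artin as background and only uses the statement afterwards. Your main line is correct and follows the standard modern route. The braid relations give the homomorphism, and the Birman exact sequence gives surjectivity. The evaluation fibration $\Homeo^+(\D,\partial\D)\to\mathrm{UConf}_n(\mathrm{int}\,\D)$, whose total space is contractible by the Alexander trick, then identifies $\MM(\D,\PP_n)$ with $\pi_1(\mathrm{UConf}_n)$. You should evaluate on the unordered set $\PP_n$; with the ordered space $\mathrm{Conf}_n$ you only recover the pure subgroup.

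Compared with the bare citation, this explains why Artin's braids (loops of configurations) are the same thing as mapping classes of the punctured disc. It does not reduce the dependence on Artin, though. The hard content, that the relations of $A_{n-1}$ are a complete set of relations, is still imported, now as the presentation of $\pi_1(\mathrm{UConf}_n)$. Once you import it, your construction and surjectivity steps are superfluous: the composite $\BB_n\to\pi_1(\mathrm{UConf}_n)\to\MM(\D,\PP_n)$ is already bijective. All that remains is to check that the connecting map sends the elementary exchange of $p_i$ and $p_{i+1}$ to $\sigma_{e_i}$. The direction of rotation and the composition convention (the connecting map may come out as an anti-homomorphism) are absorbed by the automorphism $t_i\mapsto t_i^{-1}$ and the word-reversing anti-automorphism of $\BB_n$.

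Your alternative inductive argument for injectivity does not work as stated. Take the pure subgroup $P_n$ of the group defined by Artin's presentation. The split sequence $1\to F_{n-1}\to P_n\to P_{n-1}\to 1$, with kernel freely generated by $A_{1n},\dots,A_{n-1,n}$, is Artin's combing theorem. Its usual proof is geometric, so it already uses the identification of the presented group with geometric braids that you are trying to establish. The phrase ``by the presentation'' hides exactly the missing work.

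To make the induction self-contained, prove from the braid relations alone two facts: that $P_n$ is generated by the $A_{ij}$, and that $U_n=\langle A_{1n},\dots,A_{n-1,n}\rangle$ is normalised by $t_1,\dots,t_{n-2}$. Then every $x\in P_n$ factors as $x=uy$, with $u\in U_n$ and $y$ in the image of $P_{n-1}$. After that, no exact top row and no five lemma are needed. Suppose $x$ maps to $1$. Forgetting $p_n$ and the induction hypothesis give $y=1$. Then $u=1$, because the $n-1$ generators of $U_n$ map to the pushes of $p_n$ along a free basis of $\pi_1(\D\setminus\PP_{n-1},p_n)$. Since the push map is injective, $U_n$ maps isomorphically onto a free group of rank $n-1$.
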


\begin{expl}\label{expl2_5}
Applying Theorem \ref{thm2_4} with $n=1$ we deduce that $\MM (\D, \PP_1) = \{ 1\}$. 
It is also well-known that $\MM (\D, \emptyset) = \{ 1\}$ (see \cite[Lemma 2.1]{FM12}). 
Applying it with $n =2$ we deduce that $\MM (\D, \PP_2) \simeq \Z$ is an infinite 
cyclic group generated by the half-twist $\sigma_{e_1}$.
\end{expl}

\begin{expl}\label{expl2_6}
We recall that the \emph{cylinder} is the surface $\A = \S^1 \times [0,1]$ of genus $0$ with two boundary components and that the \emph{pair of pants} is the surface $\P$ of genus $0$ with three boundary components. 
Other mapping class groups of small surfaces that interest us are that of the cylinder with $0$ or $1$ marked points, and that of the pair of pants with $0$ marked points.
The following result is found in Section 2.2.2 of \cite{FM12}.
\end{expl}

\begin{prop}\label{prop2_7}\leavevmode
\begin{itemize}
\item[(1)] 
The group $\MM (\A, \emptyset)$ is infinite cyclic generated by $\tau_a$, where $a$ is the simple closed curve illustrated on the left hand portion of Figure \ref{fig2_8}.
\item[(2)]
Let $p$ be a point in the interior of $\A$. 
Then $\MM (\A, \{ p \})$ is a free abelian group of rank $2$ generated by $\tau_{a_1}$ and $\tau_{a_2}$, where $a_1$ and $a_2$ are the simple closed curves illustrated in the central portion of Figure \ref{fig2_8}.
\item[(3)]
The group $\MM (\P, \emptyset)$ is a free abelian group of rank three generated by $\tau_{a_1}$, $\tau_{a_2}$ and $\tau_{a_3}$, where $a_1$, $a_2$ and $a_3$ are the simple closed curves sketched in the right hand portion of Figure \ref{fig2_8}.
\end{itemize}
\end{prop}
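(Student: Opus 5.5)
We prove the three items by the same scheme: find a surface with trivial mapping class group, cut $\A$ or $\P$ along a system of arcs to reach it, and use the Alexander method to control the classes of these arcs.

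\emph{Item (1).} Let $\delta$ be a properly embedded arc in $\A$ joining the two boundary components. Take $f \in \Homeo^+(\A,\emptyset)$. Then $f(\delta)$ is again an arc with the same endpoints, since $f$ is the identity near $\partial\A$. Lift to the universal cover $[0,1]\times\mathbb{R}$. There, after an isotopy rel boundary, $f(\delta)$ is isotopic to $\delta$ twisted $k$ times around the core, for a unique $k\in\Z$; this $k$ is read off as the difference of lifted endpoints. Composing $f$ with $T_a^{-k}$ gives a homeomorphism fixing $\delta$ up to isotopy. We may then isotope it to fix $\delta$ pointwise and cut along $\delta$. The result is a disc, and $\MM(\D,\emptyset)=\{1\}$ by Example \ref{expl2_5}, so $f$ is isotopic to $T_a^{k}$.

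This shows $\tau_a$ generates. For infinite order, $T_a^k$ sends $\delta$ to an arc whose lift has endpoint displacement $k$. This displacement is an isotopy invariant rel boundary, so the map $f\mapsto k$ is a well-defined homomorphism $\MM(\A,\emptyset)\to\Z$, and it is an isomorphism.

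\emph{Item (2).} Use the Birman-type exact sequence obtained by forgetting $p$:
\[1\to \pi_1(\A,p)\to \MM(\A,\{p\})\to \MM(\A,\emptyset)\to 1.\]
There is no center issue here, since the boundary is fixed. Here $\pi_1(\A)\cong\Z$, and the point-pushing map around the core loop equals $\tau_{a_1}\tau_{a_2}^{-1}$, where $a_1,a_2$ are the curves on either side of $p$. The quotient $\MM(\A,\emptyset)$ is generated by the image of $\tau_{a_1}$, which is the core twist. Hence $\tau_{a_1},\tau_{a_2}$ generate. They commute because the curves are disjoint.

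For freeness, suppose $\tau_{a_1}^{m}\tau_{a_2}^{n}=1$. Mapping to $\MM(\A,\emptyset)$ gives $m+n=0$. Then the element is a nonzero power of the push map, which is nontrivial because the sequence is exact and $\pi_1(\A)$ is torsion-free. So $m=n=0$, and the group is $\Z^2$.

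\emph{Item (3).} Choose two disjoint arcs $\delta_1,\delta_2$ in $\P$, with $\delta_i$ joining boundary component $i$ to boundary component $3$. Given $f$, first isotope so that $f(\delta_1)=\delta_1$, correcting by twists about the boundary-parallel curves $a_1,a_3$. Cut along $\delta_1$ to obtain an annulus. Apply item (1) to fix $\delta_2$, correcting by the twist about the remaining curve. Then cut along $\delta_2$ to get a disc, where the mapping class group is trivial. This shows $\tau_{a_1},\tau_{a_2},\tau_{a_3}$ generate, and they commute since the curves are disjoint.

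For independence, use winding numbers of the images of $\delta_1$, $\delta_2$ and of an arc $\delta_{12}$ joining boundary components $1$ and $2$. These give three homomorphisms to $\Z$, and their matrix on the generators is invertible.

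The main obstacle is the step "isotope $f(\delta)$ to $\delta$ up to twists". This needs the standard fact that in an annulus, arcs with fixed endpoints are classified up to isotopy by their lifted displacement. I would prove it by removing bigons between $f(\delta)$ and $\delta$, then arguing in the universal cover.
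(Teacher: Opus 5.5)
The paper does not prove this proposition; it only refers to Farb--Margalit. Your sketch therefore has to stand on its own. Items (1) and (2) are fine.

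One small point in (2): the Birman exact sequence is usually stated for $\chi(S)<0$, so for the annulus you should check injectivity of the push map by hand. This is easy. If $f_t$ is a loop in $\Homeo^+(\A,\emptyset)$ based at the identity and $\eta$ is an arc from a point of $\partial\A$ to $p$, then $(s,t)\mapsto f_t(\eta(s))$ is a null-homotopy of $t\mapsto f_t(p)$. This is presumably what your remark about the fixed boundary means.

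The genuine gap is the first step of (3). The sentence ``isotope so that $f(\delta_1)=\delta_1$, correcting by twists about $a_1,a_3$'' presupposes the following: every simple arc in $\P$ joining the endpoints $x_1\in\partial_1$ and $x_3\in\partial_3$ of $\delta_1$ is isotopic rel endpoints to $\tau_{a_1}^m\tau_{a_3}^n(\delta_1)$ for some $m,n\in\Z$. This is essentially the whole content of (3); once it is known, your cutting argument reduces (3) to (1). It is the pair-of-pants analogue of the fact you single out as the main obstacle in (1). However, your displacement argument in the strip $[0,1]\times\mathbb{R}$ has no direct analogue here. $\pi_1(\P)$ is free of rank $2$, so homotopy classes rel endpoints of paths from $x_1$ to $x_3$ are not indexed by a single integer. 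What must be shown is that only the classes ``wind $m$ times around $\partial_1$, follow $\delta_1$, wind $n$ times around $\partial_3$'' contain \emph{simple} arcs.

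You can close the gap by working first with free endpoints.
\begin{itemize}
\item Put $\gamma=f(\delta_1)$ and $\delta_1$ in minimal position, allowing the endpoints to slide along $\partial_1$ and $\partial_3$ (removing bigons and half-bigons).
\item Cut $\P$ along $\delta_1$ to get an annulus. Every sub-arc of $\gamma$ has both endpoints on the boundary component made of $\partial_1$, $\partial_3$ and the two copies of $\delta_1$, so it cuts off a disc.
\item An innermost such disc would be a bigon or a half-bigon, so $\gamma$ misses the interior of $\delta_1$. The disc cut off by $\gamma$ itself then gives an isotopy with sliding endpoints from $\gamma$ to $\delta_1$.
\item Finally, a slide of degree $m$ of the endpoint around $\partial_1$ (resp.\ $n$ around $\partial_3$) is traded for $\tau_{a_1}^{\pm m}$ (resp.\ $\tau_{a_3}^{\pm n}$) rel endpoints.
\end{itemize}
Alternatively, you can avoid arcs. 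Capping $\partial_3$ with a once-marked disc gives a surjection $\MM(\P,\emptyset)\to\MM(\A,\{p\})$ with kernel $\langle\tau_{a_3}\rangle$ (the capping exact sequence in Farb--Margalit), and (3) follows from your (2).

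Your independence argument is fine once ``winding number'' is made precise. For instance, take $f\mapsto[f(\delta)\cdot\bar\delta]\in H_1(\P;\Z)$, which is a homomorphism because $f$ acts trivially on $H_1(\P;\Z)$. With this, the arcs $\delta_1$ and $\delta_2$ already suffice.
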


\begin{figure}[ht!]
\begin{center}
\includegraphics[width=8.4cm]{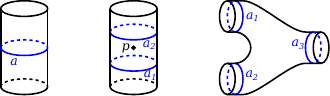}
\caption{Mapping class groups of the cylinder with  $0$ or $1$ marked points and of the pair of pants with $0$ marked points.}\label{fig2_8}
\end{center}
\end{figure}

The isotopy class of a simple and essential closed curve $a$ is denoted by $[a]$.
We denote by  $\CC (\Sigma, \PP)$ the set of isotopy classes of simple and essential closed curves on $(\Sigma, \PP)$. 
Note that, by Proposition~\ref{prop2_1}, if $u \in \CC (\Sigma, \PP)$ and $a,b \in u$, then $\tau_a = \tau_b$. 
So we can legitimately set $\tau_u = \tau_a$, where $a$ is any element of $u$.

We have an action of $\MM (\Sigma, \PP)$ on $\CC (\Sigma, \PP)$ defined as follows. 
Suppose that $f \in \MM (\Sigma, \PP)$ and $u \in \CC (\Sigma, \PP)$. 
We choose a homeomorphism $F \in \Homeo^+ (\Sigma, \PP)$ that represents $f$, a curve $a \in u$, and we set $f(u) = [F(a)]$. 

We define the \emph{intersection number} of two classes $u,v \in \CC (\Sigma, \PP)$ to be:
\[
i(u,v) = \min\{ | a \cap b| \mid a \in u \text{ and } b \in v\}\,.
\]
The following proposition brings together several fundamental results for our study.
Their proofs can be found in \cite[Section 3.3]{FM12}.

\begin{prop}\label{prop2_8}\leavevmode
\begin{itemize}
\item[(1)]
Suppose that $f \in \MM (\Sigma, \PP)$ and $u \in \CC (\Sigma, \PP)$.
Then $f \tau_u f^{-1} = \tau_{f(u)}$.
\item[(2)]
Suppose that $u,v \in \CC (\Sigma, \PP)$.
We have $\tau_u = \tau_v$ if and only if $u=v$.
\item[(3)]
Suppose that $u,v \in \CC (\Sigma, \PP)$.
We have $\tau_u \tau_v = \tau_v \tau_u$ if and only if $i(u,v)=0$.
\end{itemize}
\end{prop}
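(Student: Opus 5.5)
This is a standard package of results from Farb--Margalit, so the plan is to recall the underlying mechanisms rather than to invent new arguments. All three items rest on the facts already recorded: $\tau_a$ is well defined on isotopy classes (Proposition~\ref{prop2_1}), and we may pick representatives realising intersection numbers.

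For (1), take $F \in \Homeo^+(\Sigma,\PP)$ representing $f$ and a curve $a \in u$. Then $F\circ\hat a$ is an annular neighbourhood of $F(a)$. It is orientation compatible because $F$ preserves orientation, and by Proposition~\ref{prop2_1}(1) it may be used to define $T_{F(a)}$. Checking pointwise on the image of $F\circ\hat a$ and on its complement gives $F T_a F^{-1} = T_{F(a)}$. Passing to isotopy classes yields $f\tau_u f^{-1} = \tau_{f(u)}$.

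For (2), one direction is trivial. For the converse, suppose $u \neq v$ and find a class $c$ with $i(c,u)=0$ and $i(c,v)\neq 0$, or the symmetric situation. This is possible because distinct essential classes are distinguished by intersection with some test curve; one can take $c=u$ itself when $i(u,v)\neq 0$, and otherwise a curve crossing $v$ but disjoint from $u$. The key tool is the intersection formula
\[ i(\tau_u^k(c),c') = |k|\, i(c,u)\, i(u,c') \]
for $c'$ disjoint from $u$, together with the more general inequality
\[ \bigl| i(\tau_u^k(c),c') - |k|\, i(c,u)i(u,c')\bigr| \le i(c,c'). \]
Both are proved via the bigon criterion: curves in minimal position have no bigons, and the twisted representative has none. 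Given this, $\tau_u(c)=c$ while $\tau_v(c)\ne c$, since $i(\tau_v(c),c) = i(c,v)^2 > 0 = i(c,c)$. Hence $\tau_u\neq\tau_v$. The same formula also shows that Dehn twists have infinite order.

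For (3), if $i(u,v)=0$, choose disjoint representatives with disjoint annuli. The homeomorphisms then have disjoint supports and commute. Conversely, if $\tau_u\tau_v=\tau_v\tau_u$, then (1) gives $\tau_{\tau_u(v)} = \tau_u\tau_v\tau_u^{-1} = \tau_v$, and (2) gives $\tau_u(v)=v$. The formula above yields $i(\tau_u(v),v) = i(u,v)^2$, which forces $i(u,v)=0$.

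The main obstacle is the intersection formula, in particular its proof via the bigon criterion and minimal position. This is where I would rely on \cite[Prop.~3.2, 3.4]{FM12} rather than reprove it. A second, milder concern is the existence of the test curve in (2) in low-complexity surfaces such as the punctured disc with few points; I would handle these by hand using Example~\ref{expl2_5} and Proposition~\ref{prop2_7}.
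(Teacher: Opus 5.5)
Your treatment of (1) and (3) is the standard argument from \cite[Section 3.3]{FM12}. That is exactly what the paper relies on: it gives no proof beyond this citation. Those parts are fine.

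The gap is in (2). You claim that two distinct classes can always be separated by a test curve $c$ with $i(c,u)=0\neq i(c,v)$, and this fails in the generality of the statement. With the paper's definition, a curve is essential as soon as it does not bound a disc containing at most one point of $\PP$. So curves parallel to a boundary component belong to $\CC(\Sigma,\PP)$; the paper itself uses $d$ in $(\D,\PP_n)$ and the three boundary curves of $\P$. Suppose $u$ and $v$ are parallel to two \emph{different} boundary components of $\Sigma$, for instance in a pair of pants with one marked point or a torus with two boundary components. Then every class of $\CC(\Sigma,\PP)$ can be isotoped off both of them. No test curve exists, and the intersection formula says nothing. Your fallback via Example~\ref{expl2_5} and Proposition~\ref{prop2_7} covers only the pair of pants without marked points.

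This case needs a separate argument. One option is to use test \emph{arcs} with endpoints on $\partial\Sigma$. Another is to glue a disc containing two marked points along the boundary component parallel to $u$ and use the homomorphism $\iota_\Sigma$. In the enlarged surface $u$ is no longer peripheral while $v$ still is. Your test-curve argument then separates $\tau_u$ from $\tau_v$ there, hence already in $\MM(\Sigma,\PP)$.

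On the other hand, your worry about the punctured disc with few points is unfounded. $\D$ has only one boundary component, and a test curve always exists in $(\D,\PP_n)$, which is the only case the paper uses later. Item (3) is not affected, since $i(u,v)\neq 0$ forces both curves to be non-peripheral, so only the non-peripheral case of (2) is needed there.

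Finally, your displayed formula should say ``for $c'$ disjoint from $c$'' (e.g.\ $c'=c$), not ``disjoint from $u$''. As written, its right-hand side vanishes while its left-hand side equals $i(c,c')$.
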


We say that two simple and essential closed curves $a$ and $b$ on $(\Sigma, \PP)$ are in \emph{minimal position} if $i ([a], [b]) = |a \cap b|$. 
Further, we say that $a$ and $b$ \emph{co-bound a disc} if there exists a disc $D$ embedded in the interior of  $\Sigma$, bounded by the union of an arc of $a$ and an arc of $b$, and not intersecting $\PP$ (see Figure \ref{fig2_9}). 
The following result yields a very useful criterion for deciding whether two simple and essential closed curves are in minimal position.

\begin{figure}[ht!]
\begin{center}
\includegraphics[width=3.2cm]{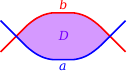}
\caption{Disc co-bounded by $a$ and $b$.}\label{fig2_9}
\end{center}
\end{figure}

\begin{thm}[Epstein \cite{Eps66}]\label{thm2_9}
Suppose that $|\PP| \ge 3$ if $\Sigma$ has genus $0$ and that $|\PP| \ge 1$ if $\Sigma$ has genus $1$. 
Let $a$ and $b$ be two simple and essential closed curves.
Then $a$ and $b$ are in minimal position if and only if they do not co-bound any disc.
\end{thm}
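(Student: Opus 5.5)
The plan follows the standard hyperbolic-geometry proof of the bigon criterion, as in \cite[Section 1.2.4]{FM12}. The two implications are of very different difficulty.

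\emph{Easy direction.} Suppose $a$ and $b$ co-bound a disc $D$, bounded by an arc $\alpha \subset a$ and an arc $\beta \subset b$, with $D \cap \PP = \emptyset$. We push $\alpha$ across $D$ slightly past $\beta$. This is an isotopy of $a$ in $\Sigma \setminus \PP$, and it removes the two corner points of $D$ from $a \cap b$. Other intersection points are created or destroyed only in pairs, and an innermost choice of $D$ keeps this under control. The upshot is that $|a\cap b|$ strictly decreases, so $a$ and $b$ were not in minimal position.

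\emph{Hard direction.} We assume $a$ and $b$ co-bound no disc and show $|a \cap b| = i([a],[b])$. The hypotheses on $|\PP|$ are exactly what makes $\Sigma \setminus \PP$ (with its boundary removed, or doubled) admit a complete hyperbolic metric, so its universal cover is identified with a convex region of $\mathbb H^2$ whose closure meets $\partial \mathbb H^2$ in the limit set. Essential simple closed curves then lift to quasi-geodesic lines with two distinct endpoints at infinity.

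\emph{Step 1: lifts meet at most once.} We show that no lift $\tilde a$ of $a$ meets a lift $\tilde b$ of $b$ in two or more points. If they did, $\tilde a \cup \tilde b$ would bound an embedded disc $\tilde D$ in the cover. We choose an innermost such bigon among all pairs of lifts, meaning one whose interior contains no arc of any lift of $a$ or $b$ crossing it in a bigon-creating way. The key point is that the covering projection restricted to this innermost bigon is injective. If two points of $\tilde D$ had the same image, a deck transformation $g$ would give $g\tilde D \cap \tilde D \neq \emptyset$. Since lifts of the same simple curve are disjoint, the sides of $g\tilde D$ would then have to cut into $\tilde D$, either contradicting innermostness or forcing $g\tilde D \subset \tilde D$. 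The latter is impossible: $g$ would then have a fixed point by the Brouwer fixed point theorem, while deck transformations act freely. The projected disc also avoids $\PP$, since punctures are not in the cover. Hence we get a disc co-bounded by $a$ and $b$, a contradiction.

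\emph{Step 2: counting.} The points of $a \cap b$ correspond bijectively to $\pi_1$-orbits of pairs $(\tilde a, \tilde b)$ of intersecting lifts, by Step 1. Two such lines intersect (an odd number of times, hence once) exactly when their endpoint pairs link on $\partial\mathbb H^2$. Endpoints at infinity are invariant under isotopy, since isotopies lift to bounded-distance maps of the cover. Therefore, for any $a' \in [a]$ and $b' \in [b]$, every linked pair of lifts of $a'$ and $b'$ meets at least once. This gives $|a' \cap b'| \ge |a \cap b|$, hence $|a\cap b| = i([a],[b])$.

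\emph{Main obstacle.} I expect the main difficulty to be the injectivity of the projection of the innermost bigon in Step 1. There one must carefully rule out overlaps between deck translates of the bigon, using that $a$ and $b$ are simple, so distinct lifts of each are disjoint. A secondary technical point is choosing the hyperbolic structure near $\partial\Sigma$: I would double $\Sigma$ along its boundary, or treat boundary components as cusps, and check that the boundary curves never obstruct the argument, since $a$ and $b$ avoid $\partial\Sigma$.
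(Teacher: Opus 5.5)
The paper does not prove this statement. It quotes it from Epstein, with a pointer to Farb--Margalit, and your outline is the standard Farb--Margalit argument. The easy direction and the counting in Step 2 are essentially sound.

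\textbf{The gap in Step 1.} The injectivity argument, which you rightly identify as the crux, has a genuine hole. Your dichotomy is: either a side of $g\tilde D$ enters the interior of $\tilde D$, contradicting innermostness, or $g\tilde D\subset\tilde D$, contradicting Brouwer. It omits the case where $g\tilde D$ and $\tilde D$ have disjoint interiors and meet only at a shared corner, i.e.\ $g(v_1)=v_2$ for the corners $v_1,v_2$ of $\tilde D$.

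The other boundary identifications are indeed excluded by innermostness. If a point of $\tilde\alpha$ is identified with a point of $\tilde\beta$, or if $g\tilde\alpha$ overlaps $\tilde\alpha$ in more than a point, then some lift crosses an open side of $\tilde D$. The shared-corner configuration, however, is invisible to innermostness. It genuinely occurs once orientability is dropped.

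Take the core $a$ of a M\"obius band and a push-off $b$ meeting it once. Each has a single lift, and both lifts are invariant under the glide reflection $g$. They cross at $v, gv, g^2v,\dots$, the bigon between $v$ and $gv$ is innermost, and both of its corners project to the unique point of $a\cap b$. Step 1 is purely topological and never uses that $\Sigma$ is oriented, so as written it cannot be complete.

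\textbf{The fix.} The missing ingredient is the sign argument. Orient $a$, $b$ and their lifts compatibly. The two corners of an embedded bigon are crossings of opposite sign, and the covering map preserves orientation. Hence $p(v_1)$ and $p(v_2)$ are crossings of $a$ and $b$ of opposite sign, and so are distinct. Equivalently: $g$ would map $\tilde a$ and $\tilde b$ to themselves, preserving their directions since it acts freely on them, and it would preserve the orientation of the cover, hence the intersection signs.

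With this added, your argument for the interior works. The interiors of $\tilde D$ and $g\tilde D$ are components of the complement of $p^{-1}(a\cup b)$, so if they meet they coincide, and then Brouwer applies. This is a clean substitute for the usual appeal to the fact that a null-homotopic simple closed curve bounds a disc.

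\textbf{Smaller points.} First, in this paper curves parallel to a boundary component count as essential, for example $d$ with $\tau_d=\Delta^2$. If you turn boundary components into cusps, such curves lift to curves converging to a single parabolic fixed point, so your claim that essential curves lift to lines with two distinct endpoints fails. Use geodesic boundary or the double instead. With the double, you must check that a bigon in the double stays in $\Sigma$: it would have to contain any boundary component it meets, and that component would then bound a puncture-free disc, which the hypotheses exclude.

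Second, ``intersect iff linked'' in Step 2 needs the four endpoints to be distinct. Lifts of $a$ and $b$ share an endpoint only when $a$ and $b$ are freely homotopic. In that case both lifts are invariant under the same deck transformation, so they meet either infinitely often or not at all, and Step 1 gives the latter.

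Finally, your whole argument (finite graphs, parity counts, pushing across a bigon) tacitly assumes that $a$ and $b$ are transverse. The statement needs this too: two isotopic curves tangent at a single point co-bound no disc, yet they are not in minimal position.
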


The following result is proved in \cite{FLP79} (see also \cite[Proposition 3.4]{FM12}).

\begin{thm}\label{thm2_10}
Suppose that $|\PP| \ge 3$ if $\Sigma$ has genus $0$ and that $|\PP| \ge 1$ if $\Sigma$ has genus $1$. 
Suppose that $u_1, \dots, u_p, v, w \in \CC (\Sigma, \PP)$ and $k_1, \dots, k_p \in \Z$ such that $i(u_i, u_j) = 0$ for each $1 \le i,j \le p$. 
Set $g=\tau_{u_1}^{k_1} \cdots \tau_{u_p}^{k_p}$. 
Then
\[
\left| i (g(w), v) - \sum_{i=1}^p |k_i| \, i(u_i, v)\, i(u_i,w) \right| \le i (v,w)\,.
\]
\end{thm}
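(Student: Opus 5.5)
The plan is to follow the classical surgery argument (as in \cite[Proposition 3.4]{FM12}). First I fix good representatives. I choose pairwise disjoint curves $a_1 \in u_1, \dots, a_p \in u_p$, together with $b \in v$ and $c \in w$, such that every pair among $a_1, \dots, a_p, b, c$ is in minimal position. This is possible because $i(u_i,u_j)=0$: for instance, under the hypotheses on $\PP$ one can take geodesic representatives for a hyperbolic metric, or one can remove co-bounded discs using Theorem \ref{thm2_9}. Choosing the annuli $\hat a_i$ thin and pairwise disjoint, each arc of $b$ or $c$ crossing an annulus $\hat a_i$ meets it in a single transverse segment.

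\textbf{Construction of a representative of $g(w)$.} I build a curve $c'$ representing $g(w) = \tau_{u_1}^{k_1} \cdots \tau_{u_p}^{k_p}(w)$ explicitly. Inside each annulus $\hat a_i$, every one of the $i(u_i,w)$ segments of $c$ is replaced by a segment that winds $|k_i|$ times around the annulus, in the direction given by the sign of $k_i$. Outside the annuli, $c'$ coincides with $c$. Since the twists are supported in disjoint annuli, $c'$ is the image of $c$ by a representative of $g$.

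\textbf{Upper bound.} Each winding segment crosses each of the $i(u_i,v)$ segments of $b$ in $\hat a_i$ exactly once per turn. Outside the annuli, $c'$ meets $b$ only in the points of $c \cap b$, which may be assumed to lie outside the annuli. Hence
\[
|c' \cap b| = \sum_{i=1}^p |k_i|\, i(u_i,v)\, i(u_i,w) + i(v,w)\,,
\]
and in particular $i(g(w),v) \le \sum_i |k_i|\, i(u_i,v)\, i(u_i,w) + i(v,w)$.

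\textbf{Lower bound (main obstacle).} It remains to show $i(g(w),v) \ge \sum_i |k_i|\, i(u_i,v)\, i(u_i,w) - i(v,w)$. Since $|c' \cap b|$ exceeds the target lower bound by $2\,i(v,w)$, and each bigon removal lowers the count by $2$, it suffices to show that reducing $c'$ and $b$ to minimal position needs at most $i(v,w)$ bigon removals. By Theorem \ref{thm2_9}, $c'$ and $b$ fail to be in minimal position only through discs they co-bound, so I would analyse such discs.

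First, consider an innermost disc $D$ co-bounded by an arc of $c'$ and an arc of $b$. I claim its corner or its $c'$-side must involve an original intersection point of $c \cap b$. If it did not, then either $D$ would lie in one annulus, which is impossible because all turns of $c'$ inside $\hat a_i$ have the same direction while $b$ crosses the annulus transversally, or $D$ would yield a disc co-bounded by $b$ and some $a_i$, or by $b$ and $c$. This contradicts the minimal position of these pairs. I expect this case analysis (tracking how a disc can straddle several annuli and the region outside them) to be the delicate part.

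Second, I would check that successive removals use distinct points of $c \cap b$. This gives an injection from removal steps into $c \cap b$, so at most $i(v,w)$ removals occur, which completes the inequality.
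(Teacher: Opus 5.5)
Your upper bound is fine. The lower bound, however, is the entire content of the theorem, and your step (a) is false as stated.

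Suppose an arc of $b$ and an arc of $c$ run side by side from $\hat a_i$ to $\hat a_j$ and, together with arcs of the two annuli, bound a rectangle containing no marked point. Suppose also that $k_i$ and $k_j$ have opposite signs. Then, for one of the two possible positions of $b$ relative to $c$, the strand of $c'$ spirals away from $b$ in $\hat a_i$, so it crosses $b$ just before leaving $\hat a_i$. It spirals towards $b$ in $\hat a_j$, so it crosses $b$ just after entering $\hat a_j$. The arc of $c'$ between these two crossings and the corresponding arc of $b$ co-bound essentially that rectangle. This is a disc with no marked points whose two corners are both twist crossings. It lies in no single annulus, and it yields no disc co-bounded by $b$ and some $a_i$ or by $b$ and $c$, because its four sides lie on four different curves. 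So your case analysis cannot exclude it.

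This is a defect of the statement itself. Take $\Sigma=\D$ and $\PP=\PP_4$. Let $u_1,u_2,v$ be the classes of regular boundaries of $e_1,e_3,e_2$, set $w=v$, $k_1=1$, $k_2=-1$, so $g=t_1^2t_3^{-2}$. The inequality forces $i(g(v),v)=8$. But with $c$ a parallel copy of $b$, the disc above appears on one of the two sides of $b$, so $i(g(v),v)\le 6$; in fact it equals $4$. More trivially, $u_1=u_2$ with $k_1=-k_2\neq 0$ gives $g=1$. The paper only cites \cite{FLP79} and \cite[Proposition 3.4]{FM12}, and the latter assumes all exponents have the same sign. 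That hypothesis must be added, and Corollary \ref{corl2_11} needs the same correction.

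Even with the same-sign hypothesis, step (b) is not an argument. After one bigon removal the curve is no longer the surgered curve $c'$. The reasoning of (a) relies on the spiral structure and on minimal position of $a_i,b,c$, so it no longer applies. Nothing you say guarantees that each later innermost bigon still has a corner in what remains of $c\cap b$, or that a removal does not create a bigon with two twist corners. Bigons with one corner in $c\cap b$ and one twist corner really do occur, for instance when $b$ and $c$ cross next to an annulus with the spiral turning the appropriate way. So this bookkeeping is exactly the hard part, not a routine check. To finish, you need either a class of configurations that contains $(c',b)$, is stable under bigon removal, and satisfies (a), or a global count that avoids iteration, for example by comparing lifts of $c'$ and $b$ in the universal cover.
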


Setting $v = w$ we derive:

\begin{corl}\label{corl2_11}
Suppose that $|\PP | \ge 3$ if $\Sigma$ has genus $0$ and that $|\PP| \ge 1$ if $\Sigma$ has genus $1$. 
Suppose that $u_1, \dots, u_p, v \in \CC (\Sigma, \PP)$ and $k_1, \dots, k_p \in \Z$ such that $i(u_i, u_j) = 0$ for each $1 \le i,j \le p$. 
Set $g=\tau_{u_1}^{k_1} \cdots \tau_{u_p}^{k_p}$. 
Then
\[
i(g(v), v) = \sum_{i=1}^p |k_i| \, i(u_i,v)^2\,.
\]
\end{corl}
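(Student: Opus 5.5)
The plan is to specialise Theorem~\ref{thm2_10} to $w=v$ and show that the resulting two-sided inequality collapses to an equality. Setting $w=v$ there gives
\[
\Bigl| i(g(v),v) - \sum_{i=1}^p |k_i|\, i(u_i,v)\, i(u_i,v) \Bigr| \le i(v,v)\,.
\]
It then remains only to show that $i(v,v)=0$. Once that is established, the absolute value of the difference is at most $0$, hence equals $0$, and the stated formula follows at once. The hypotheses of Corollary~\ref{corl2_11} are exactly those of Theorem~\ref{thm2_10} restricted to the case $w=v$, so the theorem applies without any further checking.

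The one point needing care is $i(v,v)=0$ for $v\in\CC(\Sigma,\PP)$. By definition, $i(v,v)$ is the minimum of $|a\cap b|$ over all $a,b\in v$, with $a$ and $b$ chosen independently in the class. Choose a representative $a$ with an embedded annulus $\hat a:\A\to\Sigma$ as in the construction of Dehn twists, so that $a(z)=\hat a(\tfrac12,z)$ and the image of $\hat a$ avoids $\partial\Sigma$ and $\PP$. Set $b(z)=\hat a(\tfrac14,z)$. Then $b$ is disjoint from $a$, and $b$ is isotopic to $a$ by sliding the parameter $t$ from $\tfrac14$ to $\tfrac12$ inside the annulus. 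Hence $b\in v$, so $i(v,v)\le |a\cap b|=0$, and therefore $i(v,v)=0$.

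There is no real obstacle here. The only subtlety is to read $i(v,v)$ correctly, as a minimum over pairs of possibly different representatives, rather than as a self-intersection count of a single curve.
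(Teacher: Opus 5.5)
Your argument is exactly the paper's one-line derivation (specialise Theorem~\ref{thm2_10} to $w=v$). You also make explicit the fact $i(v,v)=0$, which the paper leaves implicit and which your parallel copy $\hat a(1/4,\cdot)$ of $a$ establishes correctly.

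One caveat applies equally to the paper. Theorem~\ref{thm2_10} as quoted omits the hypothesis in \cite[Proposition 3.4]{FM12} that the $k_i$ all have the same sign, and without it the corollary is false. For instance, take $u_1=u_2$ with $i(u_1,v)\neq 0$ and $k_1=-k_2=1$: then $g=1$, so the left side is $0$ while the right side is $2\,i(u_1,v)^2$. Requiring the $u_i$ to be distinct does not fix this either: if $u_1,u_2$ cobound an annulus containing one marked point, then $\tau_{u_1}\tau_{u_2}^{-1}$ is a point-push and the formula can overcount. So your remark that the theorem applies ``without any further checking'' is only valid once that sign condition is added.
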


\begin{expl}\label{expl2_12}
One application of the above to the mapping class group $\MM (\D, \PP_n) = \BB_n$ for $n \ge 4$ that we shall use in our proof is the following.
Let $a_1, a_2$ and $b$ be the simple closed curves sketched in Figure~\ref{fig2_10}. 
We observe that $a_1$ and $b$ do not co-bound any disc, hence, by Theorem \ref{thm2_9}, $i ([a_1], [b])=2$.
Further, $i ([a_2], [b]) = 2$.
Suppose that $k_1, k_2 \in \Z$ and $g = \tau_{a_1}^{k_1} \tau_{a_2}^{k_2}$.
Applying Corollary \ref{corl2_11} we have $i (g ([b]), [b]) = 4 (|k_1| + |k_2|)$.
Hence, by Proposition \ref{prop2_8}, $\tau_{g ([b])}$ and $\tau_{b}$ commute if and only if $k_1 = k_ 2 = 0$.
\end{expl}

\begin{figure}[ht!]
\begin{center}
\includegraphics[width=7.2cm]{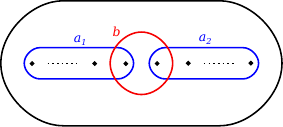}
\caption{The curves $a_1, a_2$ and $b$ from Example \ref{expl2_12}.}\label{fig2_10}
\end{center}
\end{figure}

We say that a finite set $\AA \subseteq \CC (\Sigma, \PP)$ \emph{describes a triangle} if there exist $u,v,w \in \AA$ such that $i (u,v) \neq 0$, $i (u,w) \neq 0$ and $i (v,w) \neq 0$. 
In the opposite case, we say that $\AA$ \emph{does not describe any triangle}. 
It follows immediately from Lemme 2.9 of \cite{FM12} (or see \cite[Proposition 2.1.3]{Cas16}) that:

\begin{thm}\label{thm2_13}
Let $a_1, \dots, a_p$ be simple closed curves such that $a_i$ and $a_j$ are in minimal position for all $i,j \in \{1, \dots, p\}$ with $i \neq j$.
Suppose that $f \in \MM(\Sigma, \PP)$. 
Suppose that $\AA = \{ [a_1], \dots, [a_p] \}$ does not describe any triangle and that $f([a_i]) = [a_i]$ for each $i \in \{1, \dots, p\}$. 
Then there exists $F \in \Homeo^+ (\Sigma, \PP)$ representing $f$ such that $F(a_i) = a_i$ for each $i \in \{1, \dots, p\}$.
\end{thm}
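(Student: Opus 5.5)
The plan is to deduce Theorem \ref{thm2_13} from the standard ``change of coordinates'' and ``isotopy extension'' facts in \cite{FM12}, by induction on $p$.

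\textbf{Base case.} Start with a representative $F_0$ of $f$. Since $f([a_1])=[a_1]$, the curve $F_0(a_1)$ is isotopic to $a_1$. By the isotopy extension theorem there is an ambient isotopy $H_t$, supported away from $\partial\Sigma$ and $\PP$, carrying $F_0(a_1)$ to $a_1$. Replacing $F_0$ by $H_1\circ F_0$ gives a representative $F_1$ of the same class with $F_1(a_1)=a_1$.

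\textbf{Inductive step.} Suppose $F_{k}$ represents $f$ and fixes $a_1,\dots,a_k$ setwise. Consider $a_{k+1}$. Its image $F_k(a_{k+1})$ is isotopic to $a_{k+1}$, and it is in minimal position with each $a_i$ for $i\le k$, because $F_k$ preserves these curves and intersection counts. We need an isotopy from $F_k(a_{k+1})$ to $a_{k+1}$ that preserves $a_1\cup\dots\cup a_k$ setwise; composing it with $F_k$ then gives $F_{k+1}$.

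\textbf{Main obstacle.} This is exactly where Lemma 2.9 of \cite{FM12} is used. It states that if two isotopic curves are each in minimal position with a family of curves having no triangles, then the isotopy can be chosen to preserve that family. The no-triangle hypothesis is what makes this possible. In a bigon argument, pushing $F_k(a_{k+1})$ across a bigon with $a_{k+1}$ could force it to cross some $a_i$ and $a_j$ that intersect each other. With no triangles, the curves meeting $a_{k+1}$ are pairwise disjoint. One can then check, using Theorem \ref{thm2_9}, that bigons and annuli between $F_k(a_{k+1})$ and $a_{k+1}$ only meet the $a_i$ in arcs crossing them transversally, so the isotopy can be performed while sliding along those $a_i$.

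\textbf{Orientation.} A final subtlety is that $F_{k+1}$ might reverse $a_{k+1}$. The statement only requires setwise equality, so this causes no problem. Doing the step for each $k$ yields $F=F_p$.
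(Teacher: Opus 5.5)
Your route is the paper's: the paper deduces the statement directly from Lemma 2.9 of \cite{FM12}, and your induction with the bigon/annulus sweep is that lemma's own proof unrolled. The lemma is stated for two whole families of curves, not for a single curve, so you can skip the induction. Apply it once to $\{a_1,\dots,a_p\}$ and $\{F(a_1),\dots,F(a_p)\}$, for any representative $F$ of $f$, and compose $F$ with the resulting homeomorphism isotopic to the identity.

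There is one step in your sketch that fails, although the defect comes from the statement as printed rather than from you. You claim that the annulus between $F_k(a_{k+1})$ and $a_{k+1}$ meets each $a_i$ only in arcs running from one boundary component to the other. This is false when some $a_i$ with $i\le k$ is isotopic to $a_{k+1}$. In that case $a_i$ can lie entirely inside that annulus, and no isotopy preserving $a_i$ can push $F_k(a_{k+1})$ across it.

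This cannot be patched, because the literal statement is false in that case. Take $\Sigma=S^2$ with $p_1,p_2$ in the northern hemisphere and $p_3,p_4$ in the southern hemisphere, placed so that the rotation $R$ by $\pi$ about a horizontal axis preserves $\PP$. Let $a_1$ be the equator, let $a_2$ be a parallel circle close to it in the northern hemisphere, and let $f=[R]$. Then $f([a_1])=[a_1]=[a_2]$, and the hypotheses hold. Now let $F$ be any representative of $f$ with $F(a_1)=a_1$. Isotopic homeomorphisms induce the same permutation of $\PP$, so $F$ sends $p_1$ into the southern hemisphere. Hence $F$ maps the northern disc onto the southern one, and therefore $F(a_2)\neq a_2$.

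So one must assume the classes $[a_1],\dots,[a_p]$ are pairwise distinct, which Lemma 2.9 of \cite{FM12} also assumes. With that hypothesis your argument is complete. The hypothesis holds in every application in the paper, for instance for the curves $b_1,b_2,b_4,b_5,b_6$ of Lemma~\ref{lem2_15}.
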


\begin{expl}\label{expl2_14}
Now we shall show how to use the above results to determine the centraliser of
$\{ t_1, t_2, t_4, t_5, t_6\}$ in the braid group $\BB_7$. 
For this we identify $\BB_7$ with $\MM (\D, \PP_7)$ and consider the simple closed curves $a_1$, $a_2$ and $d$ sketched in Figure \ref{fig2_11}.
\end{expl}

\begin{figure}[ht!]
\begin{center}
\includegraphics[width=8cm]{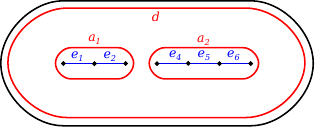}
\caption{Centraliser of  $\{ t_1, t_2, t_4, t_5, t_6\}$ in $\BB_7$.}\label{fig2_11}
\end{center}
\end{figure}

\begin{lem}\label{lem2_15}
The centraliser of $\{ t_1, t_2, t_4, t_5, t_6\}$ in $\BB_7$ is a free abelian group of rank $3$ generated by $\tau_{a_1}$, $\tau_{a_2}$ and $\tau_{d}$.
\end{lem}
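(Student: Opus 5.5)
We need to guess the curves. Presumably $a_1$ surrounds $p_1,p_2,p_3$ (the support of $t_1,t_2$), $a_2$ surrounds $p_4,\dots,p_7$ (the support of $t_4,t_5,t_6$), and $d$ is the boundary-parallel curve, or possibly a curve surrounding all seven points. The plan rests on this reading of Figure \ref{fig2_11}.

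The three twists pairwise commute, since the curves are disjoint (Proposition \ref{prop2_8}(3)). Each twist commutes with every $t_i$ for $i \neq 3$. Indeed, $t_i = \sigma_{e_i}$ is supported in a disc that is either disjoint from $a_1$, $a_2$ and $d$ or contains none of them, so $t_i$ fixes each of the three classes, and Proposition \ref{prop2_8}(1) gives $t_i\tau_u t_i^{-1}=\tau_{t_i(u)}=\tau_u$. So the subgroup generated by $\tau_{a_1},\tau_{a_2},\tau_d$ lies in the centraliser.

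For the reverse inclusion, let $f$ commute with $t_1,t_2,t_4,t_5,t_6$.

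1. Then $f$ commutes with $t_1^2=\tau_{b_1}$, where $b_1$ is a regular boundary of $e_1$ (Proposition \ref{prop2_3}), and similarly for the other $e_i$. By Proposition \ref{prop2_8}(1),(2), $f$ fixes the classes $[b_i]$ of these regular boundaries for $i=1,2,4,5,6$.
2. I would also show that $f$ fixes $[a_1]$ and $[a_2]$. The product $t_1t_2t_1$ is in the group generated by $t_1,t_2$, so $f$ commutes with $\Delta_3^2=(t_1t_2)^3$, which equals $\tau_{a_1}$ (the full twist on three strands is the twist about the surrounding curve). Hence $f([a_1])=[a_1]$, and the same argument with $(t_4t_5t_6)^4=\tau_{a_2}$ gives $f([a_2])=[a_2]$.
3. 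The set $\{[a_1],[a_2]\}$ does not describe a triangle, since the curves are disjoint. By Theorem \ref{thm2_13}, $f$ is represented by some $F$ with $F(a_1)=a_1$ and $F(a_2)=a_2$, and one must also check that $F$ preserves each side of each curve.
4. Cut $\D$ along $a_1$ and $a_2$. This gives a disc $D_1$ with points $p_1,p_2,p_3$, a disc $D_2$ with points $p_4,\dots,p_7$, and a pair of pants containing $p_3$... no: $p_3$ is inside $a_1$ in this reading, so the middle piece is a pair of pants $P$ with no marked points. (If instead $a_1$ enclosed only $p_1,p_2$ and $p_3$ sat between, the middle piece would carry one marked point; I would adapt using Proposition \ref{prop2_7}(2).)
5. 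The restriction map (a cutting homomorphism) sends $f$ to a triple in $\MM(D_1)\times\MM(D_2)\times\MM(P)$. Its kernel is generated by twists along $a_1$ and $a_2$, and it is an isomorphism onto the appropriate subgroup (see \cite{FM12}). The component in $\MM(P)$ is some $\tau_{a_1}^{x}\tau_{a_2}^{y}\tau_d^{z}$, by Proposition \ref{prop2_7}(3).
6. The component $f_1\in\MM(D_1,\PP_3)\cong\BB_3$ commutes with $t_1$ and $t_2$, so it is central in $\BB_3$ and is therefore a power of the full twist $\Delta_3^2=\tau_{\partial D_1}$. Likewise $f_2$ is central in $\BB_4$ and is a power of $\tau_{a_2}$. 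Both are absorbed into the abelian group $\langle\tau_{a_1},\tau_{a_2},\tau_d\rangle$.
7. Freeness of rank three follows by restricting to the pair of pants $P$ and applying Proposition \ref{prop2_7}(3), since the three curves are the boundary curves of $P$ and the inclusion is injective. This injectivity is standard: no curve bounds a disc with fewer than two points.

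The main obstacle is step 3 together with the cutting argument. One needs to justify that $F$ preserves the sides of $a_1$ and $a_2$, which holds because the enclosed point counts differ and the orientation is preserved. One also needs the description of the kernel of the cutting map, which I would take from \cite[Section 3.6]{FM12}. A further point is that "centre of $\BB_m$ equals the full twist" is not stated earlier in the paper. I would either cite it as classical or derive it by the same method: $f_1$ fixes the classes $[b_1]$ and $[b_2]$, and their union fills $D_1$.
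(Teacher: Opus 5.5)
Your argument is correct, and your reading of Figure~\ref{fig2_11} matches the paper's: $a_1$ encloses $p_1,p_2,p_3$, $a_2$ encloses $p_4,\dots,p_7$, and $d$ is parallel to $\partial\D$. It does, however, follow a different route from the paper.

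\textbf{The paper's route.} The paper never cuts along $a_1$ and $a_2$. It takes the five regular boundaries $b_1,b_2,b_4,b_5,b_6$, which do intersect one another, and applies Theorem~\ref{thm2_13} in full strength: the no-triangle condition is exactly what it checks. This gives a representative fixing all five curves. Cutting along their union produces seven once-punctured discs, one unpunctured disc and the pair of pants $P$. All the discs have trivial mapping class group (Example~\ref{expl2_5}), so $g$ is the image of an element of $\MM(P,\emptyset)$, and Proposition~\ref{prop2_7}\,(3) finishes the proof.

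\textbf{Your route.} You first show that $f$ fixes $[a_1]$ and $[a_2]$ via $\tau_{a_1}=(t_1t_2)^3$ and $\tau_{a_2}=(t_4t_5t_6)^4$. As a result, Theorem~\ref{thm2_13} is only needed for two disjoint curves. You then push the remaining work into $D_1$ and $D_2$, where you use that the centres of $\BB_3$ and $\BB_4$ are generated by the full twist.

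\textbf{What your route costs.} It imports three facts the paper's proof does not use:
\begin{itemize}
\item the full-twist identity, which the paper states only in a remark;
\item the centre of $\BB_m$;
\item the injectivity of $\iota_{D_1}$ and $\iota_{D_2}$.
\end{itemize}
The third is used silently in your step 6 and should be stated. From $f t_1 f^{-1}=t_1$ you only get $\iota_{D_1}(f_1\sigma f_1^{-1})=\iota_{D_1}(\sigma)$, where $\sigma$ is the half-twist along $e_1$ inside $D_1$. To conclude that $f_1$ commutes with $\sigma$ in $\MM(D_1,\{p_1,p_2,p_3\})$, you need $\iota_{D_1}$ to be injective. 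This holds, for instance by \cite{PR00}, or equivalently because $\langle t_1,t_2\rangle\cong\BB_3$.

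\textbf{What the paper's route buys.} It is self-contained given Section~\ref{sec2}. In fact it reproves that the centre of $\BB_n$ is $\langle\tau_d\rangle$, which is the alternative you sketch at the end.

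\textbf{A minor point.} The discussion of the kernel of a cutting homomorphism in your step 5 is unnecessary. All you actually use is the decomposition $f=\iota_P(h)\,\iota_{D_1}(f_1)\,\iota_{D_2}(f_2)$, which is immediate once $F$ is the identity near $a_1\cup a_2$.
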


\begin{rem}
Recall that the centre of $\BB_7$ is the infinite cyclic group generated by $\Delta^2 = (t_1 t_2 \cdots t_6)^7$, where $\Delta$ denotes the Garside element. 
If we interpret $\BB_7$ as $\MM (\D, \PP_7)$, then $\Delta^2 = \tau_d$.
Further, $\tau_{a_1}$ is a generator of the centre of $\langle t_1, t_2 \rangle$ and $\tau_{a_2}$ is a generator of the centre of $\langle t_4, t_5, t_6 \rangle$. 
More precisely, $\tau_{a_1} = (t_1 t_2)^3$ and $\tau_{a_2} = (t_4 t_5 t_6)^4$.
\end{rem}

In the proof of Lemma \ref{lem2_15} we shall use the following construction. 
Let $(\Sigma, \PP)$ be a surface equipped with a finite set of points. 
Let $\Omega$ be a sub-surface of $\Sigma$ such that $\PP \cap \partial \Omega = \emptyset$ (see Figure~\ref{fig2_12}).
Set $\QQ = \PP \cap \Omega$. 
We have an embedding $\hat \iota_\Omega : \Homeo^+ (\Omega, \QQ) \to \Homeo^+ (\Sigma, \PP)$ defined as follows.
If $F \in \Homeo^+ (\Omega, \QQ)$, then $\hat \iota_\Omega (F)$ is the homeomorphism of $\Sigma$ that coincides with $F$ on $\Omega$ and that is the identity outside $\Omega$. 
We verify easily that $\hat \iota_\Omega$ induces a homomorphism $\iota_\Omega : \MM (\Omega, \QQ) \to \MM (\Sigma, \PP)$. 
This homomorphism is often injective, but not always.
We refer to \cite{PR00} for a detailed study of this homomorphism.

\begin{figure}[ht!]
\begin{center}
\includegraphics[width=4.6cm]{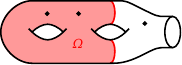}
\caption{Sub-surface.}\label{fig2_12}
\end{center}
\end{figure}

\begin{proof}[Proof of Lemma \ref{lem2_15}]
It is clear that $\tau_{a_1}$, $\tau_{a_2}$ and $\tau_d$ commute with $t_i = \sigma_{e_i}$ for each $i \in \{1, 2, 4, 5, 6 \}$. 
Further, the fact that $\tau_{a_1}$, $\tau_{a_2}$ and $\tau_d$ generate a free abelian group of rank $3$ is a classical result (see \cite[Lemma 3.17]{FM12}).
It remains to prove that, if $g \in \BB_7 = \MM (\D, \PP_7)$ commutes with $t_i = \sigma_{e_i}$ for each $i \in \{1,2,4,5,6\}$, then $g \in \langle \tau_{a_1}, \tau_{a_2}, \tau_d \rangle$.

Let $g$ be an element of $\BB_7 = \MM (\D, \PP_7)$ that commutes with $t_i = \sigma_{e_i}$  for each $i \in \{1, 2, 4, 5, 6\}$. 
Let $b_1, b_2, b_4, b_5, b_6$ be the simple closed curves sketched in Figure \ref{fig2_13}. 
By Proposition \ref{prop2_3}, we have $t_i^2=\sigma_{e_i}^2 = \tau_{b_i}$, hence $g$ commutes with $\tau_{b_i}$ for each $i \in \{1, 2, 4, 5, 6\}$. 
By Proposition \ref{prop2_8}\,(1) it follows that $ g \tau_{[b_i]} g^{-1} = \tau_{ g ([b_i])} = \tau_{[b_i]}$, hence, by Proposition \ref{prop2_8}\,(2), $g ([b_i]) = [b_i]$, for each $i \in \{1, 2, 4, 5, 6\}$. 
Further, by Theorem \ref{thm2_9}, the curves $b_i$ and $b_j$ are in minimal position for all $i,j \in \{1, 2, 4, 5, 6\}$ with $i \neq j$. 
We observe further that $\{ [b_1], [b_2], [b_4], [b_5], [b_6]\}$ does not describe any triangle. 
We deduce from Theorem \ref{thm2_13} that there exists $G \in \Homeo^+ (\D, \PP_7)$ representing $g$ such that $G(b_i) = b_i$ for each $i \in \{1, 2, 4, 5, 6\}$.

\begin{figure}[ht!]
\begin{center}
\includegraphics[width=8cm]{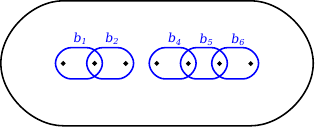}
\caption{The curves $b_1, b_2, b_4,b_5,b_6$.}\label{fig2_13}
\end{center}
\end{figure}

Suppose that $i \in \{1, 2, 4, 5, 6\}$. 
If $G$ reverses the orientation of $b_i$, then $G$ permutes the two connected components of $\D \setminus b_i$. 
But this is not possible because one of the two connected components contains $\partial \D$, which is fixed pointwise by $G$. 
We deduce that $G$ preserves the orientation of $b_i$. 
Hence we can suppose that $G$ acts as the identity on a neighbourhood of $b_i$ for each $i \in \{1, 2, 4, 5, 6\}$.

The set $B = b_1 \cup b_2 \cup b_4 \cup b_5 \cup b_6$ divides $\D$ into $9$ sub-surfaces (see Figure \ref{fig2_14}): 
\begin{itemize}
\item
$7$ discs $D_1, \dots, D_7$ such that, for each $i \in \{1, \dots, 7\}$, $\PP_7 \cap D_i = \{p_i\}$,
\item
a disc $D_0$ such that $D_0 \cap \PP_7 = \emptyset$,
\item
a pair of pants $P$ such that $P \cap \PP_7 = \emptyset$ and of which the three boundary components are isotopic to $a_1$, $a_2$ and $d$.
\end{itemize}
As explained above, the inclusion $D_i \subseteq \D$ induces a homomorphism $\iota_{D_i} : \MM (D_i, \{ p_i\}) \to \MM (\D, \PP_7)$, the inclusion $D_0 \subseteq \D$ induces a homomorphism $\iota_{D_0} : \MM (D_0, \emptyset) \to \MM (\D, \PP_7)$,  and the inclusion $P \subseteq \D$ induces a homomorphism $\iota_P : \MM (P, \emptyset) \to \MM (\D, \PP_7)$. 
Since $G$ acts as the identity on a neighbourhood of $B$, we can consider $g_i \in \MM (D_i, \{ p_i\})$ represented by the restriction of $G$ to $D_i$ for each $i \in \{1, \dots, 7\}$, $g_0 \in \MM (D_0, \emptyset)$ represented by the restriction of $G$ to $D_0$, and $h \in \MM(P, \emptyset)$ represented by the restriction of $G$ to $P$. 
Therefore, by construction, 
\[
g = \iota_P (h)\, \iota_{D_0} (g_0) \, \iota_{D_1} (g_1)\, \iota_{D_2} (g_2) \cdots \iota_{D_7} (g_7)\,.
\]
By Example \ref{expl2_5}, we have $g_i = 1$ for each $i \in \{0,1, \dots, 7\}$, and by Proposition \ref{prop2_7}\,(3), $\iota_P (h) \in \langle \tau_{a_1}, \tau_{a_2}, \tau_d \rangle$. 
We conclude that $g \in \langle \tau_{a_1}, \tau_{a_2}, \tau_d \rangle$. 
\end{proof}

\begin{figure}[ht!]
\begin{center}
\includegraphics[width=8cm]{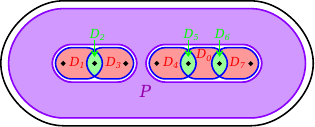}
\caption{Decomposition of $\D$ into sub-surfaces.}\label{fig2_14}
\end{center}
\end{figure}


\section{Proof of Theorem \ref{thm1_1}}\label{sec3}

As indicated in the introduction, once the tools of Section \ref{sec2} are in place, the proof of Theorem \ref{thm1_1} follows directly from the presentations of quasi-Coxeter interval groups of type $D_n$ given in \cite{BNR23}, together with results of Castel \cite{Cas16} and Chen--Kordek--Margalit \cite{CKM19} on endomorphisms of braid groups.
We begin by recalling these two results.

Assume that $n \ge 4$ and $2 \le p \le n-2$.
Let $\Omega_{n,p}$ be the Coxeter graph shown in Figure \ref{fig3_1}.
We denote by $s_1, \dots, s_{p-1}, s_p, s_p', s_{p+1}, \dots, s_{n-1}$ the standard generators of $A [\Omega_{n,p}]$, as illustrated in Figure \ref{fig3_1}.
Let $Q_{n,p}$ be the quotient of $A [\Omega_{n,p}]$ by the relation $[s_{p-1}, s_p^{-1} s_{p+1} s_p' s_{p+1}^{-1} s_p]=1$.
The following result is proved in \cite[Theorem 5.1]{BNR23}.

\begin{figure}[ht!]
\begin{center}
\includegraphics[width=6cm]{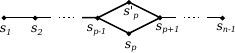}
\caption{Coxeter graph $\Omega_{n,p}$.}\label{fig3_1}
\end{center}
\end{figure}

\begin{thm}[Baumeister--Neaime--Rees \cite{BNR23}]\label{thm3_1}
Let $n \ge 4$, and let $w$ be a proper quasi-Coxeter element of $W[D_n]$.
Then there exists $p \in \{2, \dots, n-2\}$ such that $G([1,w])$ is isomorphic to $Q_{n,p}$.
\end{thm}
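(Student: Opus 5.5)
The plan is to compute a presentation of $G([1,w])$ directly from its defining one, in three stages. First I reduce the generating set to reflections and the relations to rank-$2$ relations. Then I pick a well-chosen reduced reflection factorisation of $w$. Finally I use Tietze transformations to arrive at the presentation of $Q_{n,p}$.

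\textbf{Reduction to reflections.} Since every $u \in [1,w]$ is a product of $\ell_T(u)$ reflections lying in $[1,w]$, the relations $d_u d_v = d_{uv}$ allow us to discard every $d_u$ with $\ell_T(u) \ge 2$. The surviving generators are the $d_t$ with $t \in T \cap [1,w]$. The remaining relations come from the length-$2$ elements $u \preceq w$, namely $d_t d_{t'} = d_{t'} d_{t'tt'}$ whenever $tt' \preceq w$. To justify that these dual braid relations suffice, I would use the transitivity of the Hurwitz action on reduced reflection factorisations of $w$ and of its prefixes, which holds for quasi-Coxeter elements in finite Coxeter groups by \cite{BGRW17}.

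\textbf{Choice of factorisation.} By \cite{BGRW17}, the conjugacy classes of proper quasi-Coxeter elements in $W[D_n]$ are parametrised by $p \in \{2, \dots, n-2\}$ (the Carter diagrams of type $D_n(a_{p-1})$ or similar). For each such $p$, I would exhibit an explicit factorisation
\[
w = r_1 \cdots r_{p-1}\, r_p\, r_p'\, r_{p+1} \cdots r_{n-1}
\]
into reflections generating $W[D_n]$. The intended property is that the pairs of these reflections that do not commute are exactly the edges of $\Omega_{n,p}$, with a square formed by $r_{p-1}, r_p, r_p', r_{p+1}$, or whatever the figure prescribes.

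\textbf{Tietze transformations.} The Hurwitz orbit of this tuple expresses every $d_t$, with $t \preceq w$, as a conjugate of one of the $n$ chosen generators. Substituting these expressions, the dual braid relations should collapse to two families:
\begin{itemize}
\item[(a)] the Artin relations of $\Omega_{n,p}$ among $s_i = d_{r_i}$ and $s_p' = d_{r_p'}$;
\item[(b)] the single extra relation $[s_{p-1}, s_p^{-1} s_{p+1} s_p' s_{p+1}^{-1} s_p] = 1$.
\end{itemize}
The relation in (b) encodes that $r_{p-1}$ commutes with the reflection $r_p r_{p+1} r_p' r_{p+1} r_p$ lying below $w$. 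Conversely, I would check that each relation in (a) and (b) is an instance of a dual braid relation. This gives a homomorphism $Q_{n,p} \to G([1,w])$ and a homomorphism back, and I would verify that they are mutually inverse.

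\textbf{Main obstacle.} The hard step is showing that no further relations are needed, i.e.\ that every dual relation $d_t d_{t'} = d_{t'} d_{t'tt'}$ is a consequence of (a) and (b). The number of pairs grows with $n$. I would first try an induction on $n$. A pair $t, t'$ with $tt' \preceq w$ lies, after a Hurwitz move, in a proper parabolic-type sub-interval. This sub-interval is either of type $A$, where Bessis' result \cite{Bes03} gives the Artin relations, or of type $D_m(a)$ with $m < n$, covered by induction. The base case $n = 4$ can be checked by direct (computer) enumeration.
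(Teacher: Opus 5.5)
The paper gives no proof of this statement: it quotes it from \cite[Theorem 5.1]{BNR23} and uses it as a black box. Judged on its own, your outline gets the easy parts right but leaves the actual content of the theorem unproved. The reduction to generators $d_t$, $t \in T\cap[1,w]$, with the dual braid relations is fine, granting Hurwitz transitivity for $w$ and all its prefixes from \cite{BGRW17}. The direction $Q_{n,p}\to G([1,w])$ is also a finite check once a factorisation is fixed. For $i<j$ one has $r_ir_j\preceq w$, which yields the relations of type (a). Relation (b) has to be assembled from a few dual relations, after checking that the products involved lie below $w$.

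The gap is the converse. To define $G([1,w])\to Q_{n,p}$ you must attach to each reflection $t\preceq w$ an element of $Q_{n,p}$. You would choose a braid $\beta$ carrying your base factorisation to one containing $t$, and let $\beta$ act by Hurwitz moves on $(s_1,\dots,s_{p-1},s_p,s_p',s_{p+1},\dots,s_{n-1})$. You then have to show two things:
\begin{itemize}
\item[(i)] the result depends only on $t$, not on $\beta$; in particular, every braid stabilising the base factorisation in $T^n$ must fix this tuple in $Q_{n,p}^n$;
\item[(ii)] the resulting elements satisfy all dual braid relations.
\end{itemize}
This is exactly where (b), with its specific conjugation $s_p^{-1}(\cdot)\,s_p$, has to be forced. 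Your outline instead reads (b) off the target and asserts that everything else ``should collapse''.

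The proposed induction does not supply (i) or (ii). Every $tt'$ lies in a proper sub-interval (for instance $[1,tt']$ itself), so that reduction is vacuous. Knowing $G([1,u])$, from \cite{Bes03} or by induction, only gives relations among $d_{u_1},\dots,d_{u_m}$ for some reduced factorisation $u=u_1\cdots u_m$. Those elements are not among your generators. In $Q_{n,p}$ they exist only as outputs of Hurwitz paths whose well-definedness is the very point at issue. The inductive hypothesis also says nothing about whether the relations of their (different) diagram follow from (a) and (b).

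Closing this needs a real argument. One option is an explicit description of all reflections below $w$ as signed permutations, with chosen words in the $s$'s, followed by a check of every dual relation. Another is direct control of the Hurwitz stabiliser. Either way, this is the substantive work behind the cited result of \cite{BNR23}.
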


A homomorphism $\varphi : G \to H$ is \emph{cyclic} if its image is a cyclic subgroup of $H$. 
Recall that $t_1, \dots, t_{n-1}$ denote the standard generators of $\BB_n$ as shown in Figure \ref{fig2_6}.
It is easily seen that a homomorphism $\varphi : \BB_n \to H$ is cyclic if and only if $h \in H$ exists such that $\varphi (t_i) = h$ for each $1 \le i \le n-1$. 
For each element $h$ in a group $H$ we denote by $\conj_h : H \to H$, $g \mapsto h g h^{-1}$, the conjugation map by $h$. 
Two homomorphisms $\varphi_1, \varphi_2 : G \to H$ are called \emph{conjugate} if $h \in H$ exists such that $\varphi_2 = \conj_h \circ \varphi_1$.

Recall that the \emph{Garside element} of $\BB_n$ is
\[
\Delta = (t_1 \cdots t_{n-1}) (t_1 \cdots t_{n-2}) \cdots (t_1 t_2) t_1 \,.
\]
Recall too that the centre of $\BB_n$ is the infinite cyclic group generated by $\Delta^2$. 
We verify easily that, for all $\varepsilon \in \{ \pm 1\}$ and $k \in \Z$, 
the map
\[
t_i \mapsto t_i^\varepsilon \Delta^{2k}\,, \quad 1 \le i \le n-1\,,
\]
defines a homomorphism $\alpha_{\varepsilon,k} : \BB_n \to \BB_n$.

The following result is proved in \cite{Cas16} for $n \ge 6$ and in \cite{CKM19} for $n \ge 5$.

\begin{thm}[Castel \cite{Cas16}, Chen--Kordek--Margalit \cite{CKM19}]\label{thm3_2}
Suppose that $n \ge 5$. 
Let $\varphi : \BB_n \to \BB_n$ be a homomorphism. 
Then one of the two following possibilities holds:
\begin{itemize}
\item[(1)]
$\varphi$ is cyclic.
\item[(2)]
For some $\varepsilon \in \{ \pm 1\}$ and $k \in \Z$, $\varphi$ is conjugate to $\alpha_{\varepsilon,k}$.
\end{itemize}
\end{thm}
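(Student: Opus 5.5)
The approach follows the strategy of Chen--Kordek--Margalit \cite{CKM19}, working throughout in $\MM(\D,\PP_n)\simeq\BB_n$ (Theorem \ref{thm2_4}). The argument has three steps: a cyclicity criterion, an identification of $\varphi(t_1)$ via canonical reduction systems, and a reconstruction of a chain of arcs.

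\textbf{Step 1: reduction to a single generator.} First I dispose of the degenerate case. Assume $\varphi(t_i)=\varphi(t_j)$ for some $i\neq j$.
\begin{itemize}
\item If $|i-j|=1$, the braid relation forces the common image to commute with itself trivially, and conjugation propagates the equality to all generators. Indeed, the $t_k$ are pairwise conjugate, via elements that conjugate the pair $(t_i,t_{i+1})$ to $(t_k,t_{k+1})$.
\item If $|i-j|\ge 2$, choose $t_k$ adjacent to $t_i$ but not to $t_j$; this is possible since $n\ge 5$. Applying $\varphi$ to $t_it_kt_i=t_kt_it_k$ and to $t_jt_k=t_kt_j$ gives $\varphi(t_k)=\varphi(t_i)$, so we are back in the first case.
\end{itemize}
Hence, if the $\varphi(t_i)$ are not pairwise distinct, $\varphi$ is cyclic. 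From now on I assume they are pairwise distinct. The goal is then to show that $\varphi(t_1)=\sigma_{c}^{\varepsilon}\Delta^{2k}$ for some arc $c$.

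\textbf{Step 2: the canonical reduction system of $\varphi(t_1)$ (main obstacle).} The key tool is the set $X=\{t_1,t_3,t_5,\dots\}$ of odd-indexed generators, which has $\lfloor n/2\rfloor\ge 2$ elements. It is a \emph{totally symmetric set}: its elements pairwise commute, and every permutation of $X$ is induced by conjugation in $\BB_n$. Its image $\varphi(X)$ is therefore again totally symmetric, and in Step 1 we arranged that $\varphi$ is injective on $X$.

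I would then take the Nielsen--Thurston canonical reduction systems $C_x$ of the elements $\varphi(x)$, $x\in X$. Because the $\varphi(x)$ commute and are permuted by conjugation, these systems have pairwise zero intersection number (Proposition \ref{prop2_8}) and are permuted compatibly. A counting argument should show that, after removing the central factor $\Delta^{2k}$, each $C_x$ is a single curve surrounding exactly two points, and that these curves are pairwise disjoint. The counting uses that $\D$ with $n$ points cannot accommodate too many disjoint, symmetric configurations of round curves. I expect this to be the hardest part. The case $n=5$ in particular needs extra care, since $X$ then has only two elements and one must also exploit $t_4$ and the relation with $t_2$.

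Granting the previous paragraph, $\varphi(t_1)\Delta^{-2k}$ fixes the curve $b$ of $C_{t_1}$. Cutting along $b$ as in the proof of Lemma \ref{lem2_15}, and using Example \ref{expl2_5} and Proposition \ref{prop2_7}, the element $\varphi(t_1)\Delta^{-2k}$ is a product of a power of a half-twist inside $b$ and twists about boundary curves. I would then use Corollary \ref{corl2_11}, as in Example \ref{expl2_12}, together with the braid relation with $\varphi(t_2)$, to kill the extra twists and force the exponent to be $\pm1$. This yields $\varphi(t_1)=\sigma_{c_1}^{\varepsilon}\Delta^{2k}$.

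\textbf{Step 3: reconstructing a chain of arcs.} By conjugacy, $\varphi(t_i)=\sigma_{c_i}^{\varepsilon}\Delta^{2k}$ for every $i$, with the same $\varepsilon$ and $k$. The relations of $\BB_n$ constrain the arcs:
\begin{itemize}
\item commutation $t_it_j=t_jt_i$ for $|i-j|\ge2$ forces $c_i$ and $c_j$ to be disjoint;
\item the braid relation between $t_i$ and $t_{i+1}$ forces $c_i$ and $c_{i+1}$ to share exactly one endpoint and have disjoint interiors. I would check this by the intersection-number criterion applied to the regular boundaries (Proposition \ref{prop2_3}, Theorem \ref{thm2_9}).
\end{itemize}
Thus $c_1,\dots,c_{n-1}$ form a chain of arcs through all $n$ points. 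Any such chain is carried to the standard chain $e_1,\dots,e_{n-1}$ of Figure \ref{fig2_7} by some $F\in\Homeo^+(\D,\PP_n)$, by the classification of surfaces applied to a regular neighbourhood of the chain. Writing $h$ for the class of $F^{-1}$, we get $\varphi=\conj_h\circ\alpha_{\varepsilon,k}$, which is possibility (2).
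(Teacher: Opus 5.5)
The paper does not prove this statement. It imports it from Castel and Chen--Kordek--Margalit and uses it as a black box, so your outline has to stand on its own, and it does not. You follow the Chen--Kordek--Margalit strategy, but the step that carries essentially all of the content is asserted rather than proved. In Step 2 you say that ``a counting argument should show'' that each canonical reduction system $C_x$, $x\in X$, is a single curve surrounding two points, with the $C_x$ pairwise disjoint. That is the heart of their theorem. As formulated, it also skips cases that must be excluded:
\begin{itemize}
\item $C_x$ could be empty, i.e.\ $\varphi(x)$ periodic or pseudo-Anosov modulo the centre $\langle \Delta^2\rangle$;
\item $C_x$ could contain curves lying in every $C_y$, $y\in X$, besides its ``private'' part.
\end{itemize}
Excluding these needs a genuine classification of totally symmetric multicurves in $(\D,\PP_n)$, together with an analysis of the pieces of the Nielsen--Thurston decomposition. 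For $n=5$ you remark that $|X|=2$ but do not say what replaces the count.

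There is also a concrete error in the second half of Step 2. Knowing only $C_{t_1}=\{b\}$, cutting along $b$ does not bring you to the surfaces of Example \ref{expl2_5} and Proposition \ref{prop2_7}. The outer piece is an annulus containing $n-2$ marked points, its mapping class group is large, and all you know is that $\varphi(t_1)$ restricts there to a periodic or pseudo-Anosov class. In Lemma \ref{lem2_15} this problem does not arise, because one cuts along enough invariant curves that only discs with at most one marked point and a pair of pants remain.

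To reach $\varphi(t_1)=\sigma_c^{j}\Delta^{2k}$, you must use that $\varphi(t_1)$ commutes with $\varphi(t_3),\varphi(t_5),\dots$ and hence preserves their reduction systems. Those curves lie in the outer piece and are essential and non-peripheral there when $n\ge 5$. This excludes the pseudo-Anosov case. You then need a separate argument showing that a periodic class fixing those curves and both boundary components is trivial up to boundary twists. Only after that does your reduction to $j=\pm1$ via the braid relation make sense. That reduction, the chain reconstruction in Step 3, and Step 1 are standard, though the half-twist braid-relation facts you invoke still need proofs.
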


We now turn to the proof of Theorem \ref{thm1_1}.

\begin{proof}[Proof of Theorem \ref{thm1_1}]
We shall prove Theorem \ref{thm1_1} by contradiction.
So, we take a proper quasi-Coxeter element $w$ of $W [D_n]$ and we assume the existence of a surjective homomorphism $\varphi : G ([1,w]) \to \BB_n$.
By Theorem \ref{thm3_1}, we may assume that $p \in \{2, \dots, n-2\}$ exists such that $G([1,w]) = Q_{n,p}$.

We observe easily from the defining relations of $Q_{n,p}$ and $\BB_n$ the existence of two homomorphisms $\kappa, \kappa' : \BB_n \to Q_{n,p}$ defined by:
\begin{gather*}
\kappa (t_i) = \kappa' (t_i) =  s_i \text{ for } i \in \{ 1, \dots, p-1, p+1, \dots, n-1\}\,,\\
\kappa (t_p) = s_p \text{ and } \kappa' (t_p) = s_p'\,.
\end{gather*}
Hence we can consider the homomorphisms $\psi = \varphi \circ \kappa$ and $\psi' = \varphi \circ \kappa'$ from $\BB_n$ to $\BB_n$.

\begin{claim}\label{claim1}
Neither $\psi$ nor $\psi'$ is cyclic.
\end{claim}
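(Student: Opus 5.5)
We argue by contradiction, using the surjectivity of $\varphi$ and the defining relations of $Q_{n,p}$. Suppose $\psi$ is cyclic; the case of $\psi'$ is symmetric, with the roles of $s_p$ and $s_p'$ exchanged. Then there is $h \in \BB_n$ with $\varphi(s_i) = h$ for all $i \ne p$ and $\varphi(s_p) = h$.

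The first step is to show that $\varphi(s_p')$ commutes with $h$. In $\Omega_{n,p}$ the vertex $s_p'$ is joined to $s_{p-1}$ and $s_{p+1}$ and commutes with every other generator. Since $n \ge 5$, there is some generator $s_j$ with $j \notin \{p-1,p,p+1\}$ (for instance $s_1$ if $p \ge 3$, or $s_{n-1}$ if $p \le n-3$; since $n\ge5$ one of these holds). Moreover $s_p'$ commutes with $s_p$, as the two vertices are not adjacent in $\Omega_{n,p}$. Applying $\varphi$ to $s_p s_p' = s_p' s_p$ gives $h\, \varphi(s_p') = \varphi(s_p')\, h$.

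It follows that the image of $\varphi$ is generated by $h$ and $g=\varphi(s_p')$, which commute. So the image is abelian. Since $\varphi$ is surjective, $\BB_n$ would be abelian, which is false for $n \ge 3$ (for example $t_1 t_2 \ne t_2 t_1$). This is the contradiction.

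For $\psi'$ the same argument applies, because $s_p$ and $s_p'$ commute: if $\varphi(s_i)=h$ for $i\ne p$ and $\varphi(s_p')=h$, then $\varphi(s_p)$ commutes with $h$, and again the image is abelian.

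The only point needing care is reading the graph $\Omega_{n,p}$: we must confirm that $s_p$ and $s_p'$ are non-adjacent, both being joined to $s_{p-1}$ and $s_{p+1}$. We do not even need the extra relation of $Q_{n,p}$, and the hypothesis $n \ge 5$ is not really needed here either; $n\ge3$ suffices.
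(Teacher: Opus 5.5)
Your proof is correct and is essentially the paper's argument: in both, the image of $\varphi$ is generated by $h$ and $\varphi(s_p')$, and the relation $s_ps_p'=s_p's_p$ forces that image to be too small to be all of $\BB_n$. The paper also uses the braid relation $s_{p-1}s_p's_{p-1}=s_p's_{p-1}s_p'$ to get $\varphi(s_p')=h$, so the image is cyclic, whereas you stop at ``abelian'', which already contradicts surjectivity (your remark about a generator $s_j$ with $j\notin\{p-1,p,p+1\}$ is never used and can be dropped).
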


\begin{proof}
Suppose that $\psi$ is cyclic. 
There exists $h \in \BB_n$ such that $\psi (t_i) = \varphi (s_i) = h$ for each $i \in \{1, \dots, p-1, p, p+1, \dots, n-1 \}$. 
Then
\[
\varphi (s_p') =
\varphi (s_{p-1} s_p' s_{p-1} s_p'^{-1} s_{p-1}^{-1}) =
h \, \varphi (s_p') \, h \, \varphi (s_p')^{-1} h^{-1} =
\varphi (s_p s_p' s_p s_p'^{-1} s_p^{-1}) =
\varphi (s_p) = h \,.
\]
Hence $\Im (\psi) = \langle h \rangle$ which is a contradiction, since $\varphi$ is surjective and $\BB_n$ is not cyclic. 
We prove in the same way that $\psi'$ is not cyclic.
\end{proof}

It follows from Theorem \ref{thm3_2} that there exist $\varepsilon \in \{ \pm 1\}$ and $k \in \Z$ such that $\psi$ is conjugate to $\alpha_{\varepsilon,k}$. 
Similarly, there exist $\mu \in \{ \pm 1\}$ and $l \in \Z$ such that $\psi'$ is conjugate to $\alpha_{\mu, l}$. 
Hence we can assume without loss of generality that $\psi = \alpha_{\varepsilon,k}$ and there exists $g \in \BB_n$ such that $\psi' = \conj_g \circ \alpha_{\mu, l}$.

\begin{claim}\label{claim2}
We have $\mu=\varepsilon$ and $l=k$.
\end{claim}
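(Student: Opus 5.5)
We compare $\psi$ and $\psi'$ on the generators where they agree. Since $\kappa$ and $\kappa'$ coincide on $t_i$ for $i \ne p$, we have $\psi(t_i) = \psi'(t_i)$ for all $i \in \{1,\dots,n-1\}\setminus\{p\}$. With $\psi = \alpha_{\varepsilon,k}$ and $\psi' = \conj_g\circ\alpha_{\mu,l}$ this reads
\[
t_i^\varepsilon \Delta^{2k} = g\, t_i^\mu \Delta^{2l} g^{-1} = (g t_i g^{-1})^\mu \Delta^{2l}, \qquad i \neq p,
\]
because $\Delta^2$ is central. The plan is to extract $\mu=\varepsilon$ and $l=k$ from this family of equations using homomorphisms to $\Z$ and the mapping-class description of $\BB_n$.

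\textbf{Step 1: comparing exponent sums.} Apply the abelianisation $\mathrm{ab}:\BB_n\to\Z$, $t_j\mapsto 1$. Note $\mathrm{ab}(\Delta^2)=n(n-1)$. For each $i\neq p$ we get
\[
\varepsilon + k\,n(n-1) = \mu + l\,n(n-1).
\]
Hence $\varepsilon-\mu \equiv 0 \pmod{n(n-1)}$. Since $|\varepsilon-\mu|\le 2<n(n-1)$ for $n\ge 5$ (indeed $n \ge 3$ suffices), this forces $\mu=\varepsilon$, and then $l=k$ follows from the same equation.

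\textbf{Why this suffices.} Abelianisation alone appears to settle the claim, and this is legitimate: there is at least one index $i\ne p$, since $n-1\ge 2$. I expect no real obstacle here.

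\textbf{Consequence for the rest of the proof.} The claim then simplifies the displayed equations to $t_i = g t_i g^{-1}$ for every $i\neq p$, using $\varepsilon=\mu$, $k=l$ and cancelling $\Delta^{2k}$. So $g$ lies in the centraliser of $\{t_i : i\neq p\}$. This is presumably where the paper will use a Lemma~\ref{lem2_15}-type computation, together with Example~\ref{expl2_12}, to handle the extra relation of $Q_{n,p}$. That step is not needed for the present claim.
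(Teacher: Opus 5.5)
Your proposal is correct and is essentially the paper's argument. The paper applies the exponent-sum map $\zeta$ to $\varphi(s_1)=\psi(t_1)=\psi'(t_1)$, which is valid since $p\ge 2$, obtaining $\varepsilon+kn(n-1)=\mu+ln(n-1)$, and concludes as you do; your observation that $|\varepsilon-\mu|\le 2<n(n-1)$ spells out the step the paper leaves implicit.
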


\begin{proof}
Let $\zeta : \BB_n \to \Z$ be the homomorphism that maps $t_i$ to $1$ for each $i \in \{1, \dots, n-1\}$. 
We have $\zeta (\Delta^2) = n(n-1)$ and
\[
\varphi (s_1) = \psi (t_1) =  t_1^{\varepsilon} \Delta^{2k} = \psi' (t_1) = g t_1^\mu \Delta^{2l} g^{-1}\,, 
\]
hence $\zeta (\varphi (s_1)) = \varepsilon + k n (n-1) = \mu + l n (n-1)$. 
This equality is only possible if $l=k$ and $\mu = \varepsilon$.
\end{proof}

\begin{claim}\label{claim3}
We have $k = l = 0$.
\end{claim}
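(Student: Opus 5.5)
The plan is to use only the surjectivity of $\varphi$ together with the abelianisation homomorphism $\zeta : \BB_n \to \Z$ from the proof of Claim \ref{claim2}, which maps $t_i$ to $1$ for each $i$. The structure of $g$ will not be needed here.

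First, I determine $\zeta$ on each generator of $Q_{n,p}$. By construction of $\kappa$ and $\kappa'$, together with Claim \ref{claim2}, we have:
\begin{itemize}
\item[(a)] $\varphi(s_i) = \psi(t_i) = t_i^{\varepsilon}\Delta^{2k}$ for $i \in \{1,\dots,n-1\}$;
\item[(b)] $\varphi(s_p') = \psi'(t_p) = g\, t_p^{\varepsilon}\Delta^{2k} g^{-1}$.
\end{itemize}
Since $\zeta$ takes values in the abelian group $\Z$, it is invariant under conjugation, so $\zeta(g\, t_p^{\varepsilon}\Delta^{2k} g^{-1}) = \zeta(t_p^{\varepsilon}\Delta^{2k})$. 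Using $\zeta(\Delta^2) = n(n-1)$, every generator $s_1,\dots,s_{n-1},s_p'$ of $Q_{n,p}$ is therefore sent by $\zeta\circ\varphi$ to the same integer
\[
N = \varepsilon + k\,n(n-1).
\]

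Next, since these elements generate $Q_{n,p}$, the image $\zeta(\varphi(Q_{n,p}))$ is the subgroup $N\Z$ of $\Z$. Because $\varphi$ is surjective by hypothesis, and $\zeta$ is surjective since $\zeta(t_1)=1$, the composite $\zeta\circ\varphi$ is surjective onto $\Z$. Hence $N\Z = \Z$, that is, $|\varepsilon + k\,n(n-1)| = 1$.

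Finally, I rule out $k \neq 0$. Since $n \ge 5$, we have $n(n-1) \ge 20$. If $k \ne 0$, then $|\varepsilon + k\,n(n-1)| \ge 20 - 1 = 19 > 1$, a contradiction. So $k = 0$, and $l = k = 0$ by Claim \ref{claim2}.

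I do not expect any step to be a genuine obstacle. The only point needing care is that $\zeta$ takes the same value $N$ on $\varphi(s_p')$ as on the other $\varphi(s_i)$. This holds because $\zeta$ is conjugation-invariant and $\Delta^2$ is central, so the unknown conjugating element $g$ plays no role.
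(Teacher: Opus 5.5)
Your proposal is correct and follows essentially the same argument as the paper: evaluate $\zeta\circ\varphi$ on the generators $s_1,\dots,s_{n-1},s_p'$, obtain the common value $\varepsilon+k\,n(n-1)$, and use the surjectivity of $\zeta\circ\varphi$ to force $\varepsilon+k\,n(n-1)=\pm1$, hence $k=0$. You also spell out two points the paper leaves implicit: the conjugation-invariance of $\zeta$ that removes $g$, and the explicit bound $|\varepsilon+k\,n(n-1)|\ge 19$ when $k\neq0$.
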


\begin{proof}
Recall the homomorphism $\zeta : \BB_n \to \Z$ that maps $t_i$ to $1$ for each $i \in \{1, \dots, n-1\}$. 
We have $\zeta (\varphi (s_i)) = \varepsilon + k n (n-1)$ for each $i \in \{1, \dots, n-1\}$ and $\zeta (\varphi (s_p'))= \varepsilon + k n (n-1)$, hence the image of $\zeta \circ \varphi$ is $(\varepsilon + k n (n-1)) \Z$. 
Further, since $\zeta$ and $\varphi$ are surjective, the image of $\zeta \circ \varphi$ is $\Z$. 
It follows that $(\varepsilon + k n (n-1)) = \pm 1$ which implies that $k=0$.
\end{proof}

From now on we identity $\BB_n$ with the mapping class group $\MM (\D, \PP_n)$.

\begin{claim}\label{claim4}\leavevmode
\begin{itemize}
\item[(1)]
If $3 \le p \le n-3$, then $g \in \langle \tau_{a_1}, \tau_{a_2}, \tau_d \rangle$, where $a_1$, $a_2$ and $d$ are the simple closed curves shown in Figure \ref{fig3_2}\,(a).
\item[(2)]
If $p=2$, then $g \in \langle \sigma_{e_1}, \tau_{a_2}, \tau_d \rangle$, where $a_2$ and $d$ are the simple closed curves shown in Figure~\ref{fig3_2}\,(b).
\item[(3)]
If $p=n-2$, then $g \in \langle \tau_{a_1}, \sigma_{e_{n-1}}, \tau_d \rangle$, where $a_1$ and $d$ are the simple closed curves shown in Figure \ref{fig3_2}\,(c).
\end{itemize}
\end{claim}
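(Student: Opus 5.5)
By Claims \ref{claim2} and \ref{claim3} we now have $\psi=\alpha_{\varepsilon,0}$ and $\psi'=\conj_g\circ\alpha_{\varepsilon,0}$. Hence $\varphi(s_i)=t_i^\varepsilon$ for every $i\in\{1,\dots,n-1\}$, and also $\varphi(s_i)=\psi'(t_i)=g t_i^\varepsilon g^{-1}$ for $i\neq p$. So $g$ commutes with $t_i^\varepsilon$, and therefore with $t_i$, for every $i\in\{1,\dots,p-1,p+1,\dots,n-1\}$. The plan is to compute the centraliser of $\{t_i \mid i\neq p\}$ in $\BB_n=\MM(\D,\PP_n)$ by repeating the proof of Lemma \ref{lem2_15} almost verbatim.

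Let $b_i$ be the regular boundary of $e_i$. By Proposition \ref{prop2_3}, $t_i^2=\tau_{b_i}$. Since $g$ commutes with $\tau_{b_i}$, Proposition \ref{prop2_8}\,(1),(2) gives $g([b_i])=[b_i]$ for all $i\neq p$. The curves $b_i$ are pairwise in minimal position by Theorem \ref{thm2_9}, and $\{[b_i]\mid i\neq p\}$ describes no triangle, because $b_i$ meets only $b_{i\pm1}$. Theorem \ref{thm2_13} then provides a representative $G$ of $g$ with $G(b_i)=b_i$ for all $i\neq p$. As in Lemma \ref{lem2_15}, $G$ cannot reverse the orientation of any $b_i$, since the component of $\D\setminus b_i$ containing $\partial\D$ is fixed. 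So we may assume $G$ is the identity near $B=\bigcup_{i\neq p}b_i$.

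Next we cut $\D$ along $B$. In case (1), $3\le p\le n-3$, the pieces are $n$ once-punctured discs $D_1,\dots,D_n$, some unpunctured discs, and a pair of pants $P$ with boundary curves $a_1$ (surrounding $p_1,\dots,p_p$), $a_2$ (surrounding $p_{p+1},\dots,p_n$) and $d=\partial$-parallel. Writing $g$ as a product of images under the maps $\iota$, Example \ref{expl2_5} kills the disc contributions and Proposition \ref{prop2_7}\,(3) gives $g\in\langle\tau_{a_1},\tau_{a_2},\tau_d\rangle$.

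In case (2), $p=2$, the set $\{b_1\}$ alone surrounds $p_1,p_2$. The curve $b_1$ itself plays the role of $a_1$, and the disc it bounds, containing two marked points, is not filled by smaller curves. That piece $D$ is a disc with two marked points, and by Example \ref{expl2_5}, $\MM(D,\{p_1,p_2\})$ is generated by $\sigma_{e_1}$. This yields $g\in\langle\sigma_{e_1},\tau_{a_2},\tau_d\rangle$, where $\tau_{b_1}=\sigma_{e_1}^2$ is absorbed. Case (3) is symmetric, with the two-point disc bounded by $b_{n-1}$ around $p_{n-1},p_n$.

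The main obstacle is making the decomposition bookkeeping correct: identifying exactly which complementary pieces occur and checking that each is one of the small surfaces handled by Example \ref{expl2_5} or Proposition \ref{prop2_7}. A subtle point is that the chain $b_1,\dots,b_{p-1}$ cuts off the outer region around $p_1,\dots,p_p$ with boundary $a_1$, so one must check the complementary region between the two chains and $\partial\D$ is genuinely a pair of pants. When $n-p=2$ or $p=2$, the degenerate chain consists of a single curve, which gives a two-punctured disc instead, as treated above. I would verify this piece-by-piece with a figure, exactly as in Figure \ref{fig2_14}.
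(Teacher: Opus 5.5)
Your proposal is correct and follows the paper's proof. The paper likewise deduces from $\varphi(s_i)=t_i^\varepsilon=g t_i^\varepsilon g^{-1}$ ($i\neq p$) that $g$ centralises $\{t_i \mid i\neq p\}$, and then computes this centraliser ``exactly as in Lemma~\ref{lem2_15}''. You carry out that repetition explicitly, including the boundary cases $p=2$ (resp.\ $p=n-2$): there the disc bounded by $b_1$ (resp.\ $b_{n-1}$) has two marked points and contributes $\sigma_{e_1}$ (resp.\ $\sigma_{e_{n-1}}$), which absorbs the boundary twist $\tau_{b_1}=\sigma_{e_1}^2$ (resp.\ $\tau_{b_{n-1}}=\sigma_{e_{n-1}}^2$) coming from the pair of pants.
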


\begin{figure}[ht!]
\begin{center}
\begin{tabular}{ccc}
\includegraphics[width=5.3cm]{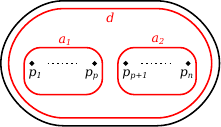} &
\includegraphics[width=4.5cm]{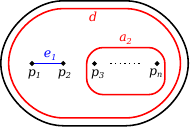} &
\includegraphics[width=4.5cm]{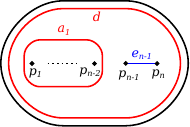}\\
(a) & (b) & (c)
\end{tabular}
\caption{Centraliser of $\{t_1, \dots, t_{p-1}, t_{p+1}, \dots, t_{n-1} \}$.}\label{fig3_2}
\end{center}
\end{figure}

\begin{proof}
We have $\varphi (s_i) = \psi (t_i) = t_i^\varepsilon = \psi' (t_i) = g t_i^\varepsilon g^{-1}$ for each $i \in \{1, \dots, p-1, p+1, \dots, n-1 \}$, hence $g$ is in the centraliser of $\{t_1, \dots, t_{p-1}, t_{p+1}, \dots,  t_{n-1} \}$. 
We show exactly as in Lemma \ref{lem2_15} that the centraliser of $\{t_1, \dots, t_{p-1}, t_{p+1}, \dots, t_{n-1} \}$ is: 
\begin{itemize}
\item
$\langle \tau_{a_1}, \tau_{a_2}, \tau_d \rangle$, if $3 \le p \le n-3$,
\item
$\langle \sigma_{e_1}, \tau_{a_2}, \tau_d \rangle$, if $p = 2$,
\item
$\langle \tau_{a_1}, \sigma_{e_{n-1}}, \tau_d \rangle$, if $p = n-2$.
\end{itemize}
\end{proof}

\begin{claim}\label{claim5}
We have $\varphi (s_p') = t_p^\varepsilon$.
\end{claim}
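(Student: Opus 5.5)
The plan is to apply $\varphi$ to the extra defining relation $[s_{p-1}, s_p^{-1} s_{p+1} s_p' s_{p+1}^{-1} s_p]=1$ of $Q_{n,p}$. This relation will force the conjugating element $g$ of Claim \ref{claim4} to commute with $t_p$.

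\textbf{Translating the relation.} By Claims \ref{claim2} and \ref{claim3} we have $\varphi(s_i)=t_i^\varepsilon$ for every $i$, and $\varphi(s_p')=g t_p^\varepsilon g^{-1}$. Applying $\varphi$ to the relation and setting $h=t_p^{-\varepsilon}t_{p+1}^{\varepsilon}$ gives
\[
[\,t_{p-1}^\varepsilon,\ h g t_p^\varepsilon g^{-1} h^{-1}\,]=1 .
\]
We identify $t_i=\sigma_{e_i}$ and let $c=b_p$ be the regular boundary of $e_p$. Since $f\sigma_a f^{-1}=\sigma_{f(a)}$ for any mapping class $f$, the element $hgt_p g^{-1}h^{-1}$ is the half-twist $\sigma_{hg(e_p)}$. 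Two commuting elements have commuting squares, so by Proposition \ref{prop2_3} the Dehn twists $\tau_{b_{p-1}}$ and $\tau_{hg([c])}$ commute. Proposition \ref{prop2_8}\,(1),(3) then gives
\[
i\bigl(g([c]),\,[v]\bigr)=0, \qquad\text{where } v=h^{-1}(b_{p-1})=t_{p+1}^{-\varepsilon}t_p^{\varepsilon}(b_{p-1}).
\]
Drawing this, $v$ should be a curve enclosing $p_{p-1}$ and $p_{p+2}$ that passes around $p_p$ and $p_{p+1}$. It should be a curve of the type $b$ in Example \ref{expl2_12}, while $c$ encloses $p_p$ and $p_{p+1}$.

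\textbf{Case $3\le p\le n-3$.} By Claim \ref{claim4}, $g=\tau_{a_1}^{k_1}\tau_{a_2}^{k_2}\tau_d^{m}$. Here $a_1$ encloses $p_1,\dots,p_p$ and $a_2$ encloses $p_{p+1},\dots,p_n$, so $\tau_d$ is central and acts trivially on curves. Both $a_1$ and $a_2$ cross $c$ essentially, so $i(a_j,c)=2$, and Theorem \ref{thm2_9} gives $i(a_j,v)=2$. Theorem \ref{thm2_10} with $w=[c]$ then yields
\[
0=i(g([c]),[v])\ \ge\ 4(|k_1|+|k_2|)-i([v],[c]).
\]
If $v$ and $c$ can be isotoped to be disjoint (I expect $i([v],[c])=0$ from the picture, checked with Theorem \ref{thm2_9}), we get $k_1=k_2=0$. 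Alternatively, if the computation gives $i([v],[c])$ small, the exact formula of Corollary \ref{corl2_11} after a change of curve argument as in Example \ref{expl2_12} should still force $k_1=k_2=0$. Then $g=\tau_d^m$ is central, so $\varphi(s_p')=gt_p^\varepsilon g^{-1}=t_p^\varepsilon$.

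\textbf{Cases $p=2$ and $p=n-2$ (main obstacle).} By symmetry it suffices to treat $p=2$, where $g=\sigma_{e_1}^{j}\tau_{a_2}^{k}\tau_d^m$ contains a possibly odd power of the half-twist $t_1=\sigma_{e_1}$. Theorem \ref{thm2_10} does not apply to half-twists directly. When $j$ is even, $\sigma_{e_1}^j=\tau_{b_1}^{j/2}$ and the argument above goes through with $u_1=[b_1]$, since $i(b_1,c)=2$ and $i(b_1,v)=2$. When $j$ is odd, I would first try writing $g=\sigma_{e_1}\,g_0$ with $g_0$ a product of commuting twists. I would then apply the argument to $g_0$, replacing the curves $c$ and $v$ by their images under $\sigma_{e_1}^{\pm1}$ (this is legitimate because $\sigma_{e_1}$ commutes with $\tau_{a_2}$ and $\tau_d$), and recompute the relevant intersection numbers. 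This should show that the only solution is $j=k=0$, with parity handled separately. I expect this bookkeeping of intersection numbers, in particular verifying the exact value of $i([v],[c])$ and its analogue after applying $\sigma_{e_1}$, to be the delicate step.
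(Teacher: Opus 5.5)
\textbf{Gap: you apply the wrong relation.} The element you build is $v=h^{-1}(b_{p-1})$, the regular boundary of an arc from $p_{p-1}$ to $p_{p+2}$. Because $t_p^{\varepsilon}$ and $t_{p+1}^{-\varepsilon}$ twist in opposite senses, this arc passes on \emph{opposite} sides of $p_p$ and $p_{p+1}$. It therefore crosses $e_p$ essentially once. So
\[
i([v],[c])=i([b_{p-1}],h([b_p]))=4,
\]
not $0$. This is exactly the intersection number computed in Claim~\ref{claim6}.

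Two consequences follow. First, the relation you use is \emph{violated} by $g=1$, and by every central $g$. So it cannot be what forces $g$ to commute with $t_p$. Second, with the correct value, Theorem~\ref{thm2_10} only gives $|k_1|+|k_2|\le 1$, and the remaining cases are not excluded. In fact, for one choice of sign, $g=\tau_{a_1}^{\pm1}$ satisfies your relation. The extra loop in the collar of $a_1$ meets $e_p$ once, in the direction opposite to the original crossing. The two crossings bound a puncture-free bigon, so $g^{-1}$ of the arc of $v$ can be made disjoint from $e_p$. Such a $g$ does not commute with $t_p$, since $i([a_1],[b_p])=2$.

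So the extra relation alone does not imply the claim, and your fallback via Corollary~\ref{corl2_11} cannot rescue it. That relation is the one that produces the final contradiction once $g$ is known to be central (Claim~\ref{claim6}). The same objection applies to your treatment of $p=2$.

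\textbf{The missing idea.} Use the Coxeter relation $s_ps_p'=s_p's_p$ of $A[\Omega_{n,p}]$; $s_p$ and $s_p'$ are not joined in $\Omega_{n,p}$. It says that $t_p^\varepsilon$ and $gt_p^\varepsilon g^{-1}$ commute. Hence $\tau_{b_p}$ and $\tau_{g([b_p])}$ commute, so $i([b_p],g([b_p]))=0$. Now both curves in Theorem~\ref{thm2_10} are $[b_p]$, so there is no error term. Corollary~\ref{corl2_11} then gives $4(|k_1|+|k_2|)=0$ at once.

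For $p=2$, the same relation handles both parities of the exponent of $\sigma_{e_1}$:
\begin{itemize}
\item If the exponent is even, $2l_1$, replace $\tau_{a_1}$ by $\tau_{b_1}=\sigma_{e_1}^2$ and argue as above.
\item If it is odd, $2l_1+1$, write $g\tau_{b_2}g^{-1}=\tau_{h'([c'])}$ with $h'=\tau_{b_1}^{l_1}\tau_{a_2}^{k_2}$ and $[c']=\sigma_{e_1}([b_2])$. Theorem~\ref{thm2_10}, with error term $i([c'],[b_2])=2$, gives $4(|l_1|+|k_2|)\le 2$, so $h'=1$. Then $i([c'],[b_2])=2\neq 0$ is a contradiction.
\end{itemize}
Your idea of absorbing the odd power of $\sigma_{e_1}$ into the curve is the right one for this step. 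It just has to be applied to $[s_2,s_2']=1$ rather than to the extra relation.
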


\begin{proof}
The proof splits into three cases: the case where $3 \le p \le n-3$, the case where $p=2$, and the case where $p=n-2$.

{\it Case 1: $3 \le p \le n-3$.} 
By Claim \ref{claim4}, there exists $g \in \langle \tau_{a_1}, \tau_{a_2}, \tau_d \rangle$ such that $\varphi (s_p') = g t_p^{\varepsilon} g^{-1}$. 
Since $\tau_d$ is central in $\BB_n = \MM (\D, \PP_n)$, we can assume that $g \in \langle \tau_{a_1}, \tau_{a_2} \rangle$, that is, that there exist $k_1, k_2 \in \Z$ such that $g = \tau_{a_1}^{k_1} \tau_{a_2}^{k_2}$. 
Since $s_p$ and $s_p'$ commute, $\varphi (s_p) = t_p^\varepsilon$ and $\varphi (s_p') = g t_p^\varepsilon g^{-1}$ commute, hence $t_p^2$ and $g t_p^2 g^{-1}$ commute. 
By Proposition \ref{prop2_3}, we have $t_p^2 = \tau_{b_p}$, where $b_p$ is a regular boundary of $e_p$ (see Figure \ref{fig3_3}). 
By Theorem \ref{thm2_9}, the curves $a_1$ and $b_p$ are in minimal position, hence  $i([a_1], [b_p]) = 2$. 
Similarly, $a_2$ and $b_p$ are in minimal position, hence $i ([a_2], [b_p]) = 2$. 
By Corollary \ref{corl2_11}, $i ([b_p], g ([b_p])) = 4(|k_1| + |k_2|)$, by Proposition \ref{prop2_8}\,(1), $g t_p^2 g^{-1} = \tau_{g ([b_p])}$, and by Proposition \ref{prop2_8}\,(3), $\tau_{[b_p]}$ and $\tau_{g ([b_p])}$ commute if and only if $i ([b_p], g ([b_p]))=0$. 
We conclude that $k_1 = k_2 = 0$, hence that $g = 1$ and $\varphi (s_p') = t_p^\varepsilon$.

\begin{figure}[ht!]
\begin{center}
\includegraphics[width=7.2cm]{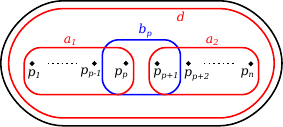}
\caption{The curves $a_1$, $a_2$ and $b_p$.}\label{fig3_3}
\end{center}
\end{figure}

{\it Case 2: $p=2$}.
By Claim \ref{claim4}, there exists $g \in \langle \sigma_{e_1}, \tau_{a_2}, \tau_d \rangle$ such that $\varphi (s_p') = g t_p^{\varepsilon} g^{-1}$. 
Since $\tau_d$ is central in $\BB_n = \MM (\D, \PP_n)$, we can assume that $g \in \langle \sigma_{e_1}, \tau_{a_2} \rangle$, that is, that there exist $k_1, k_2 \in \Z$ such that $g = \sigma_{e_1}^{k_1} \tau_{a_2}^{k_2}$. 

We begin by proving that $k_1$ is even by contradiction.
So, suppose that $k_1$ is odd, that is, $k_1 = 2 l_1 +1$ with $l_1 \in \Z$. 
Denote by $b_1$ a regular boundary of $e_1$ and by $b_2$ a regular boundary of $e_2$. 
Further, we choose a simple closed curve $c$ such that $[c] = \sigma_{e_1} ([b_2])$, as shown in Figure \ref{fig3_4}. 
Recall that, by Proposition \ref{prop2_3}, $t_1^2 = \sigma_{e_1}^2 = \tau_{b_1}$ and $t_2^2 = \sigma_{e_2}^2 = \tau_{b_2}$. 
Set $h = \tau_{b_1}^{l_1} \tau_{a_2}^{k_2}$. 
Then, by Proposition \ref{prop2_8}\,(1), 
\[
g \tau_{b_2} g^{-1} = h \sigma_{e_1} \tau_{b_2} \sigma_{e_1}^{-1} h^{-1} = h \tau_{\sigma_{e_1} ([b_2])} h^{-1} = h \tau_c h^{-1} = \tau_{h([c])}\,.
\]
Since $s_2$ and $s_2'$ commute, $\varphi (s_2) = t_2^\varepsilon$ and $\varphi(s_2') = g t_2^\varepsilon g^{-1}$ commute, hence $t_2^2 = \tau_{b_2}$ and $g t_2^2 g^{-1} = g \tau_{b_2} g^{-1} = \tau_{h ([c])}$ commute. 
Applying Theorem \ref{thm2_9} we have
\[
i([b_1],[b_2]) = i([b_1], [c]) = i([b_2], [a_2]) = i([b_2,c]) = i([c], [a_2]) = 2\,.
\]
Then, by Theorem \ref{thm2_10},
\[
4 (|l_1| + |k_2|)| -2 \le i(h([c]), [b]) \le 4 (|l_1| + |k_2|)| + 2\,.
\]
Moreover, we know from Proposition \ref{prop2_8}\,(3) that we must have $i(h([c]), [b]) = 0$.
But this is only possible if  $|l_1| + |k_2|=0$, that is, $l_1 = k_2 = 0$ and $h=1$. 
But now $0 = i(h([c]), [b]) = i([c], [b]) = 2$, which is a contradiction. 

\begin{figure}[ht!]
\begin{center}
\includegraphics[width=5.6cm]{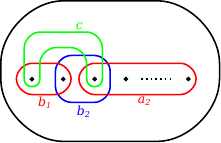}
\caption{The curves $b_1$, $b_2$, $a_2$ and $c$.}\label{fig3_4}
\end{center}
\end{figure}

Hence, $k_1$ is even, say $k_1 = 2 l_1$ with $l_1 \in \Z$. 
Suppose again that $b_1$ is a regular boundary of $e_1$ and $b_2$ a regular boundary of $e_2$ (see Figure \ref{fig3_4}). 
Recall that, by Proposition \ref{prop2_3}, $t_1^2=\sigma_{e_1}^2 = \tau_{b_1}$ and $t_2^2 = \sigma_{e_2}^2 = \tau_{b_2}$. 
In particular, $g = \tau_{b_1}^{l_1} \tau_{a_2}^{k_2}$. 
Since $s_2$ and $s_2'$ commute, $\varphi (s_2) = t_2^\varepsilon$ and $\varphi (s_2') = g t_2^\varepsilon g^{-1}$ commute, hence $t_2^2 = \tau_{b_2}$ and $g t_2^2 g^{-1} = g \tau_{b_2} g^{-1}$ commute. 
By Theorem \ref{thm2_9}, the curves $b_1$ and $b_2$ are in minimal position, hence $ i([b_1], [b_2]) = 2$. 
Similarly, $a_2$ and $b_2$ are in minimal position, hence $i ([a_2], [b_2]) = 2$. 
By Corollary \ref{corl2_11}, $i ([b_2], g ( [b_2])) = 4(|l_1| + |k_2|)$, by Proposition \ref{prop2_8}\,(1), $g t_2^2 g^{-1} = \tau_{g ([b_2])}$, and by Proposition \ref{prop2_8}\,(3), $\tau_{[b_2]}$ and $\tau_{g ([b_2])}$ commute if and only if $i ([b_2], g ([b_2]))=0$. 
We conclude that $l_1 = k_2 = 0$, hence that $g = 1$ and $\varphi (s_2') = t_2^\varepsilon$.

{\it Case 3 : $p=n-2$}.
This case is handled in the same way as Case 2.
\end{proof}

\begin{claim}\label{claim6}
We obtain a contradiction.
\end{claim}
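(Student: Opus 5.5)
At this point we know $\varphi(s_i)=t_i^\varepsilon$ for every $i\in\{1,\dots,n-1\}$ (from $\psi=\alpha_{\varepsilon,0}$), and $\varphi(s_p')=t_p^\varepsilon$ by Claim \ref{claim5}. The plan is to feed these values into the extra defining relation of $Q_{n,p}$,
\[
[s_{p-1},\, s_p^{-1} s_{p+1} s_p' s_{p+1}^{-1} s_p] = 1\,,
\]
and show that its image fails in $\BB_n$. Applying $\varphi$, the relation forces $t_{p-1}^\varepsilon$ to commute with
\[
x = t_p^{-\varepsilon} t_{p+1}^{\varepsilon} t_p^{\varepsilon} t_{p+1}^{-\varepsilon} t_p^{\varepsilon}\,.
\]
Using the braid relation $t_{p+1}t_pt_{p+1}^{-1}=t_p^{-1}t_{p+1}t_p$, valid for both signs of the exponent, we get $t_{p+1}^\varepsilon t_p^\varepsilon t_{p+1}^{-\varepsilon} = t_p^{-\varepsilon} t_{p+1}^{\varepsilon} t_p^{\varepsilon}$. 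Hence
\[
x = t_p^{-2\varepsilon} t_{p+1}^{\varepsilon} t_p^{2\varepsilon}
= \sigma_{e_p}^{-2\varepsilon}\, \sigma_{e_{p+1}}^{\varepsilon}\, \sigma_{e_p}^{2\varepsilon}
= \tau_{b_p}^{-\varepsilon}\, \sigma_{e_{p+1}}^{\varepsilon}\, \tau_{b_p}^{\varepsilon}\,,
\]
where $b_p$ is a regular boundary of $e_p$. Thus $x$ is a half-twist to the power $\varepsilon$, namely $x=\sigma_{f}^{\varepsilon}$ with $f=\tau_{b_p}^{-\varepsilon}(e_{p+1})$. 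Commutation of $t_{p-1}^\varepsilon$ with $x$ then implies that $t_{p-1}^2=\tau_{b_{p-1}}$ commutes with $x^2=\tau_{c}$, where $c$ is a regular boundary of $f$. Concretely, $[c]=\tau_{b_p}^{-\varepsilon}([b_{p+1}])$.

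By Proposition \ref{prop2_8}\,(3), this commutation requires $i([b_{p-1}],[c])=0$. So it suffices to show that
\[
i\bigl([b_{p-1}],\, \tau_{b_p}^{-\varepsilon}([b_{p+1}])\bigr)\neq 0\,.
\]
For this I apply Theorem \ref{thm2_10} with $u_1=[b_p]$, $k_1=-\varepsilon$, $w=[b_{p+1}]$ and $v=[b_{p-1}]$. The curves $b_{p-1}$ and $b_{p+1}$ are disjoint, since $e_{p-1}$ and $e_{p+1}$ share no endpoint, so $i(v,w)=0$. Moreover $i([b_p],[b_{p-1}])=i([b_p],[b_{p+1}])=2$ by Theorem \ref{thm2_9}, just as in Example \ref{expl2_12}. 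The hypothesis $n\ge 5$ guarantees $|\PP_n|\ge 3$. Theorem \ref{thm2_10} therefore gives
\[
i\bigl(\tau_{b_p}^{-\varepsilon}([b_{p+1}]),\, [b_{p-1}]\bigr) = 1\cdot 2\cdot 2 = 4 \neq 0\,.
\]
This is the desired contradiction: $\varphi$ cannot respect the extra relation, so no surjection exists.

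The main obstacle is bookkeeping rather than substance. First, the braid rewriting of $x$ must be carried out correctly for both signs of $\varepsilon$. Second, one should check that $p-1\ge 1$ and $p+1\le n-1$ hold for every $p\in\{2,\dots,n-2\}$, which they do, so the indices $p-1$, $p$, $p+1$ are always available. If the rewriting gets messy, a fallback is to observe directly that $x$ is conjugate to $t_{p+1}^{\pm 1}$ by an element built from $t_p$ and $t_{p+1}$, and then compare the curves involved.
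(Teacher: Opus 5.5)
Your argument is correct and keeps the paper's skeleton: push the extra relation of $Q_{n,p}$ through $\varphi$, square to pass from half-twists to Dehn twists, and conclude with Proposition~\ref{prop2_8}\,(3). You even end up with the paper's curve: for $\varepsilon=1$, the identity $(t_pt_{p+1})\,t_p\,(t_pt_{p+1})^{-1}=t_{p+1}$ gives $\sigma_{e_p}^{-1}\sigma_{e_{p+1}}([b_p])=\tau_{b_p}^{-1}([b_{p+1}])$.

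Where you differ is in proving $i([b_{p-1}],[c])\neq 0$. The paper draws $c=\sigma_{e_p}^{-1}\sigma_{e_{p+1}}(b_p)$ in Figure~\ref{fig3_5} and reads off $i([b_{p-1}],[c])=4$ from Epstein's criterion (Theorem~\ref{thm2_9}). That is direct, but it rests on the picture being right, and only $\varepsilon=1$ is written out.

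You instead use the braid relation to turn the conjugator into $t_p^{-2\varepsilon}=\tau_{b_p}^{-\varepsilon}$, so that $c$ is a single twist of $b_{p+1}$. Theorem~\ref{thm2_10} then gives the value $4$ from the standard numbers $i([b_p],[b_{p\pm 1}])=2$ and $i([b_{p-1}],[b_{p+1}])=0$ alone. No new curve has to be drawn, and both signs of $\varepsilon$ are handled at once. Since only one twist is involved, you are also within the hypotheses of the version of Theorem~\ref{thm2_10} in \cite{FM12}, which requires all twist exponents to have the same sign.

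Two cosmetic points. For $\varepsilon=-1$ you actually get $x^2=\tau_c^{-1}$, not $\tau_c$; this is harmless because commuting with an element is equivalent to commuting with its inverse. The detour through $x=\sigma_f^{\varepsilon}$ relies on a conjugation formula for half-twists that the paper does not state, and it can be skipped: $x^2=\tau_{b_p}^{-\varepsilon}\tau_{b_{p+1}}^{\varepsilon}\tau_{b_p}^{\varepsilon}$ together with Proposition~\ref{prop2_8}\,(1) already gives what you need.
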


\begin{proof}
By Claims \ref{claim1} to \ref{claim5}, we can suppose that there exists $\varepsilon \in \{ \pm 1\}$ such that
\begin{gather*}
\varphi (s_i) = t_i^\varepsilon \text{ for each } i \in \{1, \dots, p-1, p+1, \dots, n-1\}\,,\\
\varphi (s_p) = \varphi (s_p') = t_p^\varepsilon\,.
\end{gather*}
We suppose that $\varepsilon=1$. 
The case where $\varepsilon=-1$ is proved in the same way.

By definition, $s_{p-1}$ and $s_p^{-1} s_{p+1} s_p' s_{p+1}^{-1} s_p$ commute 
in $Q_{n,p}$, hence $\varphi(s_{p-1}) = t_{p-1}$ and $\varphi (s_p^{-1} s_{p+1}
\allowbreak
s_p' s_{p+1}^{-1} s_p) =  t_p^{-1} t_{p+1} t_p t_{p+1}^{-1} t_p$ commute, hence $t_{p-1}^2$ and  $t_p^{-1} t_{p+1} t_p^2 t_{p+1}^{-1} t_p$ commute. 
Denote by $b_{p-1}$ a regular boundary of $e_{p-1}$ and by $b_p$ a regular boundary of $e_p$. 
By Proposition \ref{prop2_3}, $t_{p-1}^2 = \tau_{b_{p-1}}$ and $t_p^2 = \tau_{b_p}$. 
Let $c$ be a simple closed curve such that $[c] = (\sigma_{e_p}^{-1} \sigma_{e_{p+1}}) (b_p)$, as shown in Figure \ref{fig3_5}. 
By Proposition \ref{prop2_8}\,(1), $t_p^{-1} t_{p+1} t_p^2 t_{p+1}^{-1} t_p = \tau_c$. 
By Theorem \ref{thm2_9}, $b_{p-1}$ and $c$ are in minimal position, hence $i([b_{p-1}], [c])=4$. 
By Proposition \ref{prop2_8}\,(3),  this implies that $t_{p-1}^2 = \tau_{b_{p-1}}$ and $t_p^{-1} t_{p+1} t_p^2 t_{p+1}^{-1} t_p = \tau_c$ do not commute, and so we have a contradiction. 
\end{proof}

Clearly, Claim \ref{claim6} completes the proof of Theorem \ref{thm1_1}.
\end{proof}

\begin{figure}[ht!]
\begin{center}
\includegraphics[width=8.8cm]{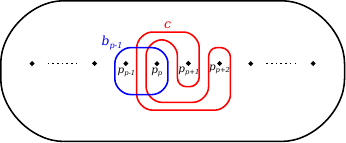}
\caption{The curves $b_{p-1}$ and $c$.}\label{fig3_5}
\end{center}
\end{figure}


\end{document}